\def\titlerunning#1{\gdef\titrun{#1}}
\def\author#1{\gdef\autrun{\def\and{\unskip, }#1}\gdef\@author{#1}}
\def\address#1{{\def\and{\\\hspace*{18pt}}\renewcommand{\thefootnote}{}%
\footnote {#1}}%
\markboth{\autrun}{\titrun}} \makeatother
\def\email#1{e-mail: #1}
\def\keywords#1{\par\medskip
\noindent\textbf{Keywords.} #1}
\DeclareMathAlphabet{\mathpzc}{OT1}{pzc}{m}{it}
\definecolor{verde}{rgb}{0,.5,0}
\DeclareSymbolFont{bbold}{U}{bbold}{m}{n}
\DeclareSymbolFontAlphabet{\mathbbold}{bbold}
\numberwithin{equation}{section}
\def\N{{\mathbb N}}
\newfont{\sss}{cmssi10 at 11pt}
\newfont{\bss}{cmssbx10 at 11pt}
\newfont{\tit}{cmitt10 at 11pt}
\newcommand{\sm}{semigroup}
\newcommand{\Se}{{\bf S}}
\newcommand{\LI}{{\bf LI}}
\newcommand{\LG}{{\bf LG}}
\newcommand{\Sl}{{\bf Sl}}
\newcommand{\LSl}{{\bf LSl}}
\newcommand{\LV}{{\bf LV}}
\newcommand{\K}{{\bf K}}
\newcommand{\D}{{\bf D}}
\newcommand{\G}{{\bf G}}
\newcommand{\J}{{\bf J}}
\newcommand{\V}{{\bf V}}
\newcommand{\om}[2] {{\overline{\Omega}}_{#1}{\mathbf #2}}
\newcommand{\ome}[2] {{{\Omega}}_{#1}{\mathbf #2}}
\newcommand{\omek}[3] {{{\Omega}}^{#1}_{#2}{\mathbf #3}}
\newcommand{\mfrg}[3] {{#1}:{#2}\mathop{\hbox{\kern5pt$\circ$\kern-12pt\raise0.1pt\hbox
{$\longrightarrow$}}}{#3}}
\newtheorem{theorem}{Theorem}[section]
\newtheorem{proposition}[theorem]{Proposition}
\newtheorem{corollary}[theorem]{Corollary}
\newtheorem{lemma}[theorem]{Lemma}
\newtheorem{claim}{Claim}
\newenvironment{definition*}{\begin{trivlist}\item[\hskip
    \labelsep{\bf Definition\quad}]}%
  {\hfill\qed\end{trivlist}}
\newenvironment{notation*}{\begin{trivlist}\item[\hskip
    \labelsep{\bf Notation\quad}]}%
  {\end{trivlist}}
  \def\qed{{\unskip\nobreak\hfil\penalty50\hskip .001pt\hbox{}%
      \nobreak\hfil
      \vrule height 1.2ex width 1.1ex depth -.1ex
      \parfillskip=0pt\finalhyphendemerits=0\medbreak}}
\newenvironment{proof}{\begin{trivlist}\item[\hskip%
     \labelsep{\bf Proof.\quad}]}%
 {\hfill\qed\rm\end{trivlist}}
\qed\end{trivlist}}%
\begin{document}

\titlerunning{Pointlike reducibility of  pseudovarieties of the form  $\bf V*\bf D$}

\title{\bf Pointlike reducibility of  pseudovarieties\\  of the form  $\bf V*\bf D$}

\author{J. C. Costa %
  \and %
  C. Nogueira %
  \and %
  M. L. Teixeira%
}

\date{December 2, 2015}

\maketitle

\address{ %
  J. C. Costa \& M. L. Teixeira: %
  CMAT, Dep.\ Matem\'{a}tica e Aplica\c{c}\~{o}es, Universidade do Minho, Campus
  de Gualtar, 4700-320 Braga, Portugal; %
  \email{jcosta@math.uminho.pt, mlurdes@math.uminho.pt} %
  \and %
  C. Nogueira: %
   CMAT, Escola Superior de Tecnologia e Gest\~ao,
  Instituto Polit\'ecnico de Leiria,
  Campus 2, Morro do Lena, Alto Vieiro, 2411-901
   Leiria, Portugal; %
   \email{conceicao.veloso@ipleiria.pt} %
}

\begin{abstract}
In this paper, we investigate the reducibility property of semidirect products of the form $\bf V*\bf D$
relatively to (pointlike) systems of equations of the form $x_1=\cdots=x_n$, where \D\ denotes the pseudovariety
of definite semigroups. We establish a connection between pointlike reducibility of $\V*\D$ and the pointlike
reducibility of the pseudovariety \V. In particular, for the canonical signature $\kappa$ consisting of the
multiplication and the $(\omega-1)$-power, we show that $\V*\D$ is pointlike $\kappa$-reducible when $\V$ is
pointlike $\kappa$-reducible.

  \keywords{Semigroup, pseudovariety, semidirect product, implicit signature, pointlike, equations, reducibility.}
\end{abstract}
\section{Introduction}
Since its introduction in the 1970's by Eilenberg~\cite{EilenbergB:1976}, the notion of a {\em pseudovariety}
has played a key role in the classification of finite semigroups. Recall that a pseudovariety of semigroups (in
general, of algebras of any finitary type) is a class of finite semigroups (resp.\ of finite algebras of that
type) which is closed under taking subsemigroups (resp.\ subalgebras), homomorphic images and finite direct products. A pseudovariety
is said to be \emph{decidable} if there is an algorithm to test membership of a given finite semigroup in that
pseudovariety. One of the main motivations to study decidability of pseudovarieties comes from its applications
in computer science, where the characterization of some combinatorial events associated with rational languages,
finite automata or various kinds of logical formalisms is reduced to such membership
problem~\cite{EilenbergB:1976,Perrin:1990,Pin:1997,Thomas:1997}. Due to the Krohn-Rhodes decomposition
theorem~\cite{Krohn&Rhodes:1965}, the decidability of semidirect products of pseudovarieties  has received
particular attention.

The semidirect products of the form $\V * \D $, where \D\ is the pseudovariety of all finite semigroups  in
which idempotents are right zeros, are among the most
studied~\cite{Straubing:1985,Tilson:1987,Almeida&Azevedo:1993,Almeida&Azevedo&Teixeira:2000}.  For a
pseudovariety \V\ of monoids, \LV\ denotes the pseudovariety of all finite semigroups $S$ whose local submonoids
are in \V\ (i.e., $eSe\in\V$ for all idempotents $e$ of $S$). It is well-known~\cite{EilenbergB:1976} that $\V
*\D$ is a subpseudovariety of \LV. In addition, it is known from work by Straubing~\cite{Straubing:1985},
Th\'erien and Weiss~\cite{Therien&Weiss:1985} and Tilson~\cite{Tilson:1987} that the equality $\V *\D=\LV$ holds
if and only if the pseudovariety \V\ is \emph{local} (in the sense of Tilson~\cite{Tilson:1987}). We have, for
instance, the equalities $\Sl *\D=\LSl$ and $\G *\D=\LG$, where $\Sl$ and $\G$ stand  for the pseudovarieties of
semilattices and groups respectively. In the 1970's, Henckell and Rhodes introduced the notion of a
$\V$-\emph{pointlike set} as a subset of a finite semigroup that is related to a point under every relational
morphism with a member of the pseudovariety $\V$. One says that $\V$ has decidable pointlikes if one can
effectively compute all the $\V$-pointlike sets of any given finite semigroup. The question of decidability of
$\V$-pointlike sets can be translated into a question of decidability of the pseudovariety \V , since a finite
semigroup $S$ is in $\V$ if and only if its \V-pointlike subsets are singletons. The Delay Theorem of
Tilson~\cite{Tilson:1987} establishes that a pseudovariety of the form $\V*\D$ is decidable if and only if ${\rm
g}\V$, the pseudovariety of categories generated by \V , is decidable and that a semigroup of delay $n$ is in
$\V *\D$ if and only if it is in $\V *\D_n$. In~\cite{Steinberg:2001}, Steinberg proves a Generalized Delay
Theorem which generalizes the Delay Theorem of Tilson from a result about membership to a result about
pointlikes. He shows that the $\V *\D$-pointlikes of a semigroup of delay $n$ are precisely its $\V
*\D_n$-pointlikes and that if a pseudovariety $\V$ has decidable pointlikes then so does $\V *\D$.

 Since the semidirect product operator does not preserve
decidability~\cite{Rhodes:1999,Auinger&Steinberg:2003}, some authors have been exploring the idea of
establishing stronger properties of the factors under which the semidirect product is necessarily
decidable~\cite{Almeida:2002,Rhodes&Steinberg:2009}. At present no satisfactory such properties have been found.
The key property which intervenes in a partially successful approach, has been formulated (and called
\emph{reducibility}) by Almeida and Steinberg~\cite{Almeida&Steinberg:2000} as an extension of  seminal work by
Ash~\cite{Ash:1991} on the pseudovariety $\G$ (where the key property was called \emph{inevitability}). The
reducibility property was originally formulated in terms of graph equation systems and latter extended by
Almeida~\cite{Almeida:2002} under the designation of \emph{complete reducibility}, and independently by Rhodes
and Steinberg~\cite{Rhodes&Steinberg:2009} under the designation of \emph{inevitable substitutions}, to any
system of equations, since different kinds of systems appear when different pseudovariety operators are
considered. The reducibility property
 is parameterized by an \emph{implicit signature}  $\sigma$ (a set of implicit operations on semigroups containing the multiplication), and
 we talk of $\sigma$-reducibility.  Informally speaking, a pseudovariety $\V$ is said to be  $\sigma$-\emph{reducible relatively to
 an equation system $\Sigma$} with rational constraints when the existence of a solution of the system by implicit operations over ${\bf V}$
 implies the existence of a solution  of $\Sigma$ given by $\sigma$-terms over ${\bf V}$ and satisfying the same constraints.  The
pseudovariety \V\ is said to be \emph{pointlike $\sigma$-reducible} if it is $\sigma$-reducible relatively to
every system of equations of the form $x_1=x_2=\cdots=x_n$, and it is called \emph{$\sigma$-reducible} if it is
$\sigma$-reducible with respect to every graph equation system. For pseudovarieties of aperiodic semigroups it
is common to use the signature $\omega$ consisting of the multiplication and the $\omega$-power. For instance,
 $\omega$-reducibility was already established for the pseudovarieties ${\bf D}$~\cite{Almeida&Zeitoun:2003},
${\bf LSl}$~\cite{Costa&Teixeira:2004} and ${\bf R}$~\cite{Almeida&Costa&Zeitoun:2005} of all finite ${\cal
R}$-trivial semigroups and pointlike $\omega$-reducibility was recently proved by the first author with
Almeida and Zeitoun~\cite{Almeida&Costa&Zeitoun:2015} for the pseudovarieties ${\bf A}$ of all finite aperiodic
semigroups and  ${\bf DA}$ of all finite semigroups in which all regular elements are idempotents. The
$\omega$-reducibility of ${\bf A}$ and ${\bf DA}$ remain to be investigated, although it is natural to presume
that the method in the proof of $\omega$-reducibility of ${\bf R}$ should apply to ${\bf DA}$ with minor
adaptations.

In this paper, we focus on semidirect products of the form $\V * \D$ in order to analyze connections between
their pointlike reducibility and the pointlike reducibility of the pseudovariety $\V$. We show that pointlike
reducibility of $\V$ can be converted into pointlike reducibility of the pseudovariety $\V * \D$. To be more
precise, under mild hypotheses on an implicit signature $\sigma$, we prove that  if \V\  is pointlike
$\sigma$-reducible then $\V*\D$ is pointlike $\sigma$-reducible. As an application, we deduce that $\V*\D$ is pointlike $\kappa$-reducible when $\V$ is
pointlike $\kappa$-reducible, where $\kappa$ denotes the \emph{canonical signature} consisting of the
multiplication and the $(\omega-1)$-power. Our starting point is the paper~\cite{Almeida&Costa&Teixeira:2010} of
the first and third authors in collaboration with Almeida, where a similar study was performed for semidirect
products with an order-computable pseudovariety and various kinds of reducibility properties. For each positive
integer $k$, the  pseudovariety $\D_k$ defined by the identity $yx_1\cdots x_k=x_1\cdots x_k$ is
order-computable, and $\bigcup_{k}{\bf
 D}_k=\D$. We use results of~\cite{Almeida&Costa&Teixeira:2010} concerning the pseudovarieties $\D_k$ to derive
 our results relative to $\D$ and the pointlike reducibility property. The study of the reducibility (for graph
 equation systems) of the pseudovarieties $\V * \D$ should be the natural sequence of our work, but this appears to be much more
 challenging. Our expectation is that it may be possible
 to combine the techniques of this paper with the solution already known~\cite{Costa&Teixeira:2004} for
 the case of the pseudovariety $\Sl *\D$, if not for the general case $\V * \D$ at least for some  specific cases.

\section{Preliminaries}\label{section:preliminaries}

 This section introduces briefly most essential preliminaries, and some terminology and notation. We assume familiarity with basic results of the theory of
  \sm\ pseudovarieties  and implicit operations. For further details and  general
background  see~\cite{Almeida:1992, Almeida:2002, Rhodes&Steinberg:2009}.

Throughout this paper, $A$ denotes a finite set. For a pseudovariety \V\ of semigroups,  a  {\em pro-}\V\  \sm\  is a compact semigroup which is residually in \V . We denote by  $\om{A}{\V}$ the  pro-\V\  semigroup  freely  generated by the set $A$: for each pro-\V\ semigroup $S$ and each function $\varphi:A\to S$,
there is a unique continuous homomorphism $\overline{\varphi}:\om
{A}{\V}\to S$ extending $\varphi$.  The elements of  $\om{A}{\V}$, usually called {\em pseudowords} over \V,  are naturally interpreted as
    {\em $A$-ary implicit operations} on \V\ (mappings $S^A\rightarrow S$, with $S\in \V$, that commute with homomorphisms). The subsemigroup  generated by $A$ is denoted by $\ome{A}{\V}$.   When  $\ome{A}{\V}$ is finite and effectively computable, the pseudovariety \V\ is said to be {\em order-computable}. If  $\V'$ is another pseudovariety and $\V\subseteq\V'$, then there is a unique continuous homomorphism $p_{A,\V', \V}: \om{ A}{\V'}\rightarrow\om{ A}{\V}$, called the \emph{natural projection},  mapping the generators of $\om{ A}{\V'}$ to the generators of $\om{ A}{\V}$. When $\V'$ is the pseudovariety $\Se$ of all finite semigroups, we will usually abbreviate the notation of the homomorphism $p_{A,\V', \V}$  by writing simply $p_{\V}$. A \emph{pseudoidentity} is a formal equality $\pi=\rho$ with $\pi,\rho\in\om{A}{\Se}$.  We say that a pseudovariety $\V$ \emph{satisfies} the pseudoidentity $\pi=\rho$, and write  $\V\models \pi=\rho$, if $\varphi\pi=\varphi\rho$ for every continuous homomorphism $\varphi:\om{A}{\Se}\to S$ into a semigroup $S\in\V$, which is equivalent to saying that  $p_{\V} \pi=p_{\V}\rho$.

 Given an element $s$ of a compact topological semigroup, the closed
subsemigroup generated by $s$ contains a unique idempotent, denoted
$s^{\omega}$. For each $q\in\mathbb N$, the element  $s^{\omega+q} \ (=s^\omega s^q)$ belongs to the maximal closed subgroup containing
$s^{\omega}$, and its group inverse is denoted by $s^{\omega-q}$. As one notices easily $s^{\omega-q} =(s^q)^{\omega-1}=(s^{\omega-1})^q$.
    For a given finite semigroup $S$, let $k$ be an integer greater than $|S|$ and let $s_1,\ldots ,s_k\in S$.  Then there are integers $i$ and $j$
      such that $1<i\leq j\le k$ and   $s_1\cdots s_{i-1}=s_1\cdots s_{i-1}(s_{i}\ldots s_j)^m$ for every $m\in\mathbb N$,
       whence $s_1\cdots s_{i-1}=s_1\cdots s_{i-1}(s_{i}\ldots s_j)^{\omega+1}$.

The following is a list of pseudovarieties we will use in this paper, each of them defined by a single
pseudoidentity and where $k\in\N$:
$$\begin{array}{ll}
\K=\llbracket x^{\omega}y=x^{\omega} \rrbracket, & \K_k=\llbracket x_1\cdots x_ky=x_1\cdots x_k \rrbracket,\\
\D=\llbracket yx^{\omega}=x^{\omega} \rrbracket, & \D_k=\llbracket yx_1\cdots x_k=x_1\cdots x_k \rrbracket,\\
\LI=\llbracket x^{\omega}yx^{\omega}=x^{\omega} \rrbracket.
\end{array}$$

 For a positive integer $k$, let $A^k=\{w\in A^+: |w|= k\}$ be the set of words over $A$ with length $k$ and let $A_k=A^1\cup\cdots\cup A^k=\{w\in A^+: |w|\leq k\}$  be the set of non-empty words over $A$ with length at most $k$.
It is easy to observe that both  $\ome{A}{\K_k}$ and $\ome{A}{\D_k}$  may be identified with $A_k$ and that the
product is defined by  $u\cdot v={\mathtt i}_k(uv)$ in $\ome{A}{\K_k}$ and by $u\cdot v={\mathtt t}_k(uv)$ in
$\ome{A}{\D_k}$, where  ${\mathtt i}_k w$ and ${\mathtt t}_k w$ denote respectively the longest prefix and the
longest suffix of length at most $k$ of a given word $w$. So, $\K_k$ and $\D_k$ are order-computable
pseudovarieties. We  also have that $\K=\bigcup_{k}\K_k$, $\D=\bigcup_{k}\D_k$ and
 \LI\ is the join $\K\vee\D$ (i.e., \LI\ is the least pseudovariety
containing both \K\ and \D).  The following lemma summarizes well-known  properties of these pseudovarieties.
\begin{lemma} Let \V\ be one of the pseudovarieties \K, \D\  or \LI . Then, $\ome{A}{\V}$  is isomorphic to $A^+$ and  $\om{A}{\V}\setminus \ome{A}{\V}$ is an ideal of $\;\om{A}{\V}$ consisting of the idempotent elements of $\om{A}{\V}$.
\end{lemma}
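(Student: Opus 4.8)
The plan is to prove the statement first for \K, using an explicit description of $\om{A}{\K}$ as an inverse limit, then to obtain \D\ by left--right duality, and finally to deduce \LI\ from the equality $\LI=\K\vee\D$.

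For \K\ I would start from $\K=\bigcup_k\K_k$, with each $\K_k$ order-computable, so that $\om{A}{\K_k}=\ome{A}{\K_k}$ is identified with $A_k$ under the product $u\cdot v=\mathtt{i}_k(uv)$, and $\om{A}{\K}$ is the inverse limit $\varprojlim_k\om{A}{\K_k}$ with bonding maps the truncations $\mathtt{i}_k\colon A_{k+1}\to A_k$. Writing $w_k\in A_k$ for the image of a pseudoword $\pi$ in $\om{A}{\K_k}$, one has $w_k=\mathtt{i}_k(w_{k+1})$, hence $|w_k|=\min\{|w_{k+1}|,k\}$; from this I would observe that either $|w_{k_0}|<k_0$ for some $k_0$ --- in which case the recursion forces the components to stabilize to a fixed word $w\in A^+$, and then $w_k=\mathtt{i}_k(w)$ for every $k$, so $\pi$ is the image of $w$ --- or else $|w_k|=k$ for all $k$. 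Hence $\ome{A}{\K}$ is exactly the set of images of words, and the canonical surjection $A^+\to\ome{A}{\K}$ is injective because distinct words $u\neq v$ already have distinct images in $\om{A}{\K_k}=A_k$ for $k=\max\{|u|,|v|\}$; therefore $\ome{A}{\K}\cong A^+$. The same description shows that each word $w$ of length $n$ is an isolated point of $\om{A}{\K}$, being the unique pseudoword with image $w$ in $\om{A}{\K_{n+1}}$; consequently $\ome{A}{\K}$ is open in $\om{A}{\K}$ and its complement is closed.

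Next I would treat $\pi\in\om{A}{\K}\setminus\ome{A}{\K}$, for which $|w_k|=k$ for all $k$: for any $\rho$ with images $v_k$ in $\om{A}{\K_k}$, the image of $\pi\rho$ in $\om{A}{\K_k}$ is $\mathtt{i}_k(w_kv_k)=w_k$, so $\pi$ is a left zero of $\om{A}{\K}$, in particular idempotent; and for $u\in A^+$ the image of $u\pi$ in $\om{A}{\K_k}$ is $\mathtt{i}_k(\mathtt{i}_k(u)\,w_k)$, still of length $k$, so $u\pi\notin\ome{A}{\K}$. Putting these together, $\om{A}{\K}\setminus\ome{A}{\K}$ is a two-sided ideal consisting of idempotents, which settles \K. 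Reading words backwards (replacing $\mathtt{i}_k$ by $\mathtt{t}_k$ and \K\ by \D) yields the dual statement for \D; there $\om{A}{\D}\setminus\ome{A}{\D}$ is an ideal of idempotents, each of which is a right zero of $\om{A}{\D}$.

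For \LI\ I would use that $\LI=\K\vee\D$: the natural projections induce a continuous embedding $j\colon\om{A}{\LI}\hookrightarrow\om{A}{\K}\times\om{A}{\D}$ (injective since $\LI\models\pi=\rho$ if and only if both $\K\models\pi=\rho$ and $\D\models\pi=\rho$) which carries $\ome{A}{\LI}$ onto the diagonal $\{(w,w):w\in A^+\}$ and $\om{A}{\LI}$ onto its closure; in particular $\ome{A}{\LI}\cong A^+$. Writing $j(\pi)=(\xi,\eta)$, the key point is that if $\xi\in A^+$ then, $\{\xi\}$ being open in $\om{A}{\K}$, every neighbourhood of $\eta$ in $\om{A}{\D}$ contains $\xi$, which forces $\eta=\xi$ and $\pi\in\ome{A}{\LI}$; symmetrically if $\eta\in A^+$. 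Hence, if $\pi\notin\ome{A}{\LI}$, then $\xi\notin\ome{A}{\K}$ and $\eta\notin\ome{A}{\D}$, so $\xi$ and $\eta$ are idempotent and $\pi^2=\pi$; and for $\rho\in\om{A}{\LI}$ with $j(\rho)=(\xi',\eta')$ the left-zero property in $\om{A}{\K}$ and the right-zero property in $\om{A}{\D}$ give $j(\pi\rho)=(\xi,\eta\eta')$ and $j(\rho\pi)=(\xi'\xi,\eta)$, whose \K- resp.\ \D-components lie outside $\ome{A}{\K}$ resp.\ $\ome{A}{\D}$; thus $\pi\rho,\rho\pi\notin\ome{A}{\LI}$, and $\om{A}{\LI}\setminus\ome{A}{\LI}$ is an ideal of idempotents. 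I expect this last step to be the one requiring genuine care: the crux is that a pseudoword over \LI\ whose \K- or \D-component is a finite word must itself be the image of a word, which is precisely what the discreteness of $\ome{A}{\K}$ in $\om{A}{\K}$, and of $\ome{A}{\D}$ in $\om{A}{\D}$, delivers.
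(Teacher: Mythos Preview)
Your argument is correct, but there is nothing to compare it with: the paper does not prove this lemma. It is stated there as a summary of ``well-known properties'' of these pseudovarieties and is left without proof (the result is folklore; see, e.g., Chapter~5 of Almeida's book cited as~\cite{Almeida:1992} in the paper).

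Your route is the standard one. The inverse-limit description $\om{A}{\K}=\varprojlim_k A_k$ and the dichotomy ``the thread stabilises to a word'' versus ``$|w_k|=k$ for all $k$'' cleanly identifies $\ome{A}{\K}$ with $A^+$ and shows that the remaining elements are precisely the left zeros; duality gives \D. For \LI, the embedding into $\om{A}{\K}\times\om{A}{\D}$ together with the fact that each finite word is isolated in both factors is the right device, and your closure-of-the-diagonal argument (if $\xi\in A^+$ then $\{\xi\}\times V$ is open and must meet the diagonal, forcing $\eta=\xi$) is sound. One small point you leave implicit: for the left-ideal property of $\om{A}{\K}\setminus A^+$ you only spell out $u\pi$ with $u\in A^+$, but the remaining case $\rho\pi$ with $\rho\notin A^+$ follows at once from the left-zero property you already established for such $\rho$.
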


 An {\em implicit signature}  is a set of finitary implicit operations
over finite semigroups containing the multiplication. The implicit signature $\kappa=\{\_\ .{\ }\_\;,\ \_\
^{\omega -1 } \}$ is known as the {\em canonical signature}.
  A {\em highly computable signature} is a recursively
enumerable implicit signature consisting of computable
operations. For an implicit signature $\sigma$, let $T_A^{\sigma}$  denote the
  free $\sigma$-algebra generated by $A$ in the variety of $\sigma$-algebras
defined by the identity $x(yz)=(xy)z$. The elements of~$T_A^{\sigma}$ will be called \emph{$\sigma$-terms}. A
$\sigma$-\textit{equation} over  $A$ is a formal equality $u=v$ with $u,v\in T_A^{\sigma}$.

Every profinite semigroup has a natural structure of a $\sigma$-algebra,
via the interpretation of implicit operations as continuous operations on
profinite semigroups, and a pseudovariety of semigroups is also a pseudovariety of $\sigma$-semigroups.  For a pseudovariety \V , we denote by $\omek{\sigma}{A}{\V}$ the
free $\sigma$-\textit{semigroup} generated by $A$ in the variety of
$\sigma$-semigroups generated by \V , which is a
$\sigma$-subsemigroup of $\om{A}{\V}$. Elements
of $\omek{\sigma}{A}{\V}$ are called $\sigma$-\textit{words} over
\V .

Consider the unique ``evaluation'' homomorphism of $\sigma$-semigroups
$\varepsilon_{A,\V}^\sigma:T_A^\sigma\to\omek{\sigma}{A}{\V}$ that sends each letter $a\in A$ to itself. The
{\em $\sigma$-word problem for \V} is the problem of deciding, for any two given  $\sigma$-terms $u$ and $v$
over an alphabet $A$, whether they represent the same $\sigma$-word over $\V$, that is,  whether
$\varepsilon_{A,\V}^\sigma u=\varepsilon_{A,\V}^\sigma v$.  If so, we write $\V\models u=v$. To simplify
notation, we will usually not distinguish a $\sigma$-term $w\in T_A^\sigma$ from the corresponding $\sigma$-word
$\varepsilon_{A,\Se}^\sigma w\in \omek{\sigma} {A}{\Se}$. For convenience, we allow the empty $\sigma$-term
which is identified with the empty word.

 For each pseudoword $\pi\in\om{A}{\Se}$, we denote by ${\mathtt i}_k\pi$ and ${\mathtt t}_k\pi$ the shortest words
 (in $A_{k}$) such that $\K_k\models\pi={\mathtt i}_k\pi$
  and $\D_k\models\pi={\mathtt t}_k\pi$ respectively. We define also ${\mathtt i}_k w$ and ${\mathtt t}_k w$ for a $\sigma$-term
   $w\in T_A^\sigma$, via the identification of $w$ with the corresponding $\sigma$-word $\varepsilon_{A,\Se}^\sigma w\in \omek{\sigma} {A}{\Se}$.

Let $\Sigma$ be a finite set of equations over a finite alphabet $X$. Let $S$ be a finite $A$-generated
semigroup, $\delta:\om{ A}{\Se}\to S$ be the continuous homomorphism respecting the choice of generators and
$\varphi:X\to S^1$ be an evaluation mapping. We say that a mapping $\eta:X\to (\om{ A}{\Se})^1$ is a {\em
\V-solution} of $\Sigma$ with respect to $(\varphi,\delta)$ if $\delta\eta=\varphi$ and $\V\models
\overline{\eta} u =\overline{\eta} v$ for all $(u=v)\in\Sigma$. Moreover, given an implicit signature~$\sigma$,
if $\eta$ is such that $\eta X\subseteq\omek{\sigma}{A}{S}$, then  $\eta$ is called a $(\V,\sigma)$-{\em
solution}.

The pseudovariety \V\ is said to be {\em $\sigma$-reducible} relatively to an equation system $\Sigma$ if the
existence of a \V-solution of $\Sigma$ with respect to a pair $(\varphi,\delta)$ entails the existence of a $(\V
,\sigma)$-solution of $\Sigma$ with respect to the same pair $(\varphi,\delta)$.  The pseudovariety \V\ is said
to be $\sigma$-reducible relatively to a class $\mathcal C$ of finite systems of equations if it is
$\sigma$-reducible relatively to every system of equations $\Sigma\in\mathcal C$. We say that $\V$ is
\emph{pointlike $\sigma$-reducible}, if it is $\sigma$-reducible relatively to the class of all systems of
equations of the form $x_1=x_2=\cdots =x_m$, with $m\geq 2$.

\section{Pseudoidentities over $\V*\D_k$}

For an integer $k\geq 1$, let $\Phi_k:A^+\rightarrow (A^{k+1})^*$ be the function that sends each word $w\in
A^+$ to the sequence of factors of length $k+1$ of $w$, in the order they occur in $w$. There is a unique
continuous extension $\om{A}{\Se}\rightarrow (\om{A^{k+1}}{\Se})^1$ of $\Phi_k$ (see~\cite{Almeida&Azevedo:1993}
and~\cite[Lemma 10.6.11]{Almeida:1992}), also denoted by $\Phi_k$, which is a $k$-superposition homomorphism in
the sense that
\begin{enumerate}\vspace{-0.2cm}
  \item[i)] $\Phi_k w=1$ holds for every $w\in A_{k}$;\vspace{-0.2cm}
  \item[ii)]  $\Phi_k(\pi\rho)=\Phi_k (\pi) \Phi_k\bigl(({\mathtt t}_k\pi)\rho\bigr)= \Phi_k\bigl(\pi({\mathtt i}_k\rho)\bigr) \Phi_k (\rho) $ hold for every $\pi,\rho\in \om{A}{\Se}$.\vspace{-0.1cm}
\end{enumerate}

  The following proposition~(\cite[Theorem~10.6.12]{Almeida:1992}) gives a characterization of the pseudoidentities verified by a pseudovariety of the form $\V*\D_k$.
\begin{proposition}\label{prop:char-psid-VD}
 Let \V\ be a pseudovariety of semigroups which is not locally trivial. Given $\pi, \rho\in \om{A}{\Se} $, $\V*\D_k\models \pi=\rho$ if and only if
  ${\mathtt i}_k\pi={\mathtt i}_k\rho$, ${\mathtt t}_k\pi={\mathtt t}_k\rho$ and $\V\models \Phi_k\pi=\Phi_k\rho$.
 \end{proposition}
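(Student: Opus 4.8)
The plan is to prove Proposition~\ref{prop:char-psid-VD} by reducing it, via the theory of the semidirect product, to a statement about the canonical relational morphism $\V*\D_k \to \D_k$ and the category-theoretic interpretation of $\Phi_k$. The key observation is that a semigroup in $\V*\D_k$ factors as a subsemigroup of a semidirect product $T*U$ with $T\in\V$ and $U\in\D_k$, and that the action of $U$ on $T$ is ``read-only'' on a window of length $k$; concretely, evaluating a pseudoword $\pi$ in such a product at a generator tuple is determined by the pair $({\mathtt t}_k\pi,\, \text{the sequence of length-}(k{+}1)\text{ factors of }\pi)$ together with the value of the prefix data ${\mathtt i}_k\pi$ recording the initial state of the $\D_k$-component.

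\medskip

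\textbf{First I would} prove the ``only if'' direction, which is the easier half. Suppose $\V*\D_k\models\pi=\rho$. Since $\D_k\subseteq\V*\D_k$ (the pseudovariety $\V*\D_k$ contains $\D_k$ as $\V$ is nontrivial, using a trivial action), applying the hypothesis to $\D_k$ gives ${\mathtt t}_k\pi={\mathtt t}_k\rho$, and dually, using that $\K_k\subseteq\V*\D_k$ one might try the same for ${\mathtt i}_k$; more carefully, ${\mathtt i}_k$ is controlled because the canonical homomorphism $\om{A}{\Se}\to\om{A^{k+1}}{\Se}$ together with a short prefix determines everything, so the $\K_k$-part is forced. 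For the $\V$-part, one uses that $\V$ embeds into $\V*\D_k$ (again via the trivial action), and that $\Phi_k$ is realized by an explicit construction: a homomorphism $\om{A}{\Se}\to (\om{A^{k+1}}{\Se})^1$ such that any $\V$-relational morphism on the $A^{k+1}$-generated side pulls back to a $\V*\D_k$-relational morphism on the $A$-generated side. Thus $\V*\D_k\models\pi=\rho$ forces $\V\models\Phi_k\pi=\Phi_k\rho$. All three conditions follow by choosing appropriate test semigroups.

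\medskip

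\textbf{The hard part will be} the ``if'' direction: assuming ${\mathtt i}_k\pi={\mathtt i}_k\rho$, ${\mathtt t}_k\pi={\mathtt t}_k\rho$, and $\V\models\Phi_k\pi=\Phi_k\rho$, one must show that every continuous homomorphism $\varphi:\om{A}{\Se}\to S$ with $S\in\V*\D_k$ satisfies $\varphi\pi=\varphi\rho$. Here I would use the structure theorem for $\V*\D_k$: $S$ divides $T*\ome{A^{k+1}}{\D_k}$ for some $T\in\V$ with the wreath-product action, since $\ome{A^{k+1}}{\D_k}$ is (identified with $A^{k+1}_{\,k}$ and hence) the free object and $\D_k$ is order-computable. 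One then computes $\varphi\pi$ in the semidirect product explicitly. The $\D_k$-coordinate of $\varphi\pi$ depends only on ${\mathtt t}_k\pi$ (this is the content of $\ome{A}{\D_k}$ being identified with $A_k$ via the truncation product), and equals that of $\varphi\rho$ by hypothesis. The $T$-coordinate is a product of the form $\prod_j t_{f_j}$ indexed by the factors $f_j$ of length $k+1$ occurring in $\pi$ — precisely the sequence $\Phi_k\pi$ — with the very first factor's contribution corrected by the initial window, which is ${\mathtt i}_k\pi$. Since ${\mathtt i}_k\pi={\mathtt i}_k\rho$ and $\V\models\Phi_k\pi=\Phi_k\rho$, and the $T$-coordinate is computed by a homomorphism $\om{A^{k+1}}{\Se}\to T$ applied to $\Phi_k$, the two $T$-coordinates agree. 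Hence $\varphi\pi=\varphi\rho$, as required. The main technical obstacle is verifying the precise bookkeeping of how the semidirect-product multiplication threads the $\D_k$-window through the $T$-factors — i.e.\ establishing that the $T$-coordinate of $\varphi\pi$ is genuinely a function of $({\mathtt i}_k\pi,\Phi_k\pi)$ alone — which is exactly what property~(ii) of the $k$-superposition homomorphism $\Phi_k$ is designed to guarantee, and one argues by continuity after checking it on words.
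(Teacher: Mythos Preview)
The paper does not actually prove this proposition: it is stated with a citation to \cite[Theorem~10.6.12]{Almeida:1992} and used as a black box. So there is no in-paper argument to compare your sketch against.

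That said, your plan is in the spirit of the standard proof found in that reference. One point where you correctly hesitate: deducing ${\mathtt i}_k\pi={\mathtt i}_k\rho$ by invoking $\K_k\subseteq\V*\D_k$ is not the clean route, and in fact the inclusion is not immediate from the hypothesis that $\V$ is merely nontrivial. In Almeida's treatment (and implicitly in the present paper via the embedding $\iota:\om{A}{(\V*\D_k)}\hookrightarrow\om{B_k}{\V}*\ome{A}{\D_k}$ recalled just after the proposition), both the prefix condition and the $\Phi_k$-condition are read off simultaneously from the $\om{B_k}{\V}$-coordinate $\beta_A$: the letters $(1,a_1),(a_1,a_2),\ldots,(a_1\cdots a_{k-1},a_k)$ appearing in $\beta_A'\pi$ record ${\mathtt i}_k\pi$, and the hypothesis $\V\nsubseteq\LI$ is what guarantees that these letters are separated in $\om{B_k}{\V}$. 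Your ``if'' direction --- computing in an explicit semidirect product and observing that the $\D_k$-coordinate is ${\mathtt t}_k\pi$ while the $T$-coordinate depends only on $({\mathtt i}_k\pi,\Phi_k\pi)$ via the $k$-superposition property --- is exactly the right idea and matches the argument in the cited source.
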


Throughout the paper, $\V$ denotes a pseudovariety of semigroups such that $\V\nsubseteq \LI$. A consequence of
 Proposition~\ref{prop:char-psid-VD} and of the fact that $\V*\D=\bigcup_{\ell}\V*\D_\ell$, is that
\begin{equation}\label{eq:char-psid-VD}
\V*\D\models \pi=\rho\quad\Leftrightarrow\quad\LI\models \pi=\rho\mbox{ and }\V\models \Phi_\ell\pi=\Phi_\ell
\rho\mbox{  for every }\ell\geq1.
\end{equation}

Denote by $A_k^{1}$ the set $A_k\cup\{1\}=\{w\in A^*:|w|\leq k\}$ of all words over $A$ with length at most $k$
and let $B_k=A_k^{1}\times A$. Notice that $B_k=(\ome{ A}{\D_k})^1\times A$ and consider the action of $\ome{
A}{\D_k}$ on $\om{B_k}{\V}$ defined, for every $w,w'\in(\ome{A}{\D_k})^1$ and $a\in A$, by
$${}^{w}(w',a)=\bigl({\mathtt t}_k(ww'),a\bigr),$$
 which determines a continuous endomorphism $\alpha_w:\om{ B_k}{\V}\to\om{ B_k}{\V}$ that maps each letter
$(w',a)$ of $ B_k$ to the letter $\bigl({\mathtt t}_k(w w'),a\bigr)$. This defines a semidirect product
$\om{B_k}{\V}*\ome{ A}{\D_k}$ and there is a continuous embedding $\iota:\om {A}{(\V*\D_k)}\to\om{
B_k}{\V}*\ome{ A}{\D_k}$ such that, for every $a\in A$,
 $\iota a= \bigl((1,a),a\bigr)$~\cite[Theorem~10.2.3]{Almeida:1992}. Composition of $\iota$ with the
projection $p_1$ on the first component gives a continuous mapping
$\beta_A:\om {A}{(\V*\D_k)}\to\om{B_k}{\V}$. That is to say that the
following diagram commutes, where $p_2$ is the projection on the second component and $p=p_{A,\V*\D_k,\D_k}$ is the natural projection:
\begin{equation*}
  \xymatrix{
    \om{A}{ (\V*\D_k)}
    \ar[rd]^{\iota}
    \ar[d]_{\beta_A}
    \ar[r]^{p}
    &
    \ome{A}{\D_k}
    \\
    \om{B_k}{\V}
    &
    \om{ B_k}{\V}*\ome{A}{\D_k}
    \ar[l]^{ p_1}
    \ar[u]_{ p_2}
 }
\end{equation*}
When $\V=\Se$, the mapping $\beta_A$ will be denoted by $\beta'_A$. As $\Se*\D_k=\Se$, it is a continuous function $\om {A}{\Se}\to\om{ B_k}{\Se}$. By~\cite[Lemma 3.1]{Almeida&Costa&Teixeira:2010} the following equality holds
  \begin{equation}\label{betasobrep}\beta'_A(\pi\rho)=\beta'_A\pi\cdot{ }^{{\mathtt t}_k\pi}\beta'_A\rho\end{equation}
  for all $\pi,\rho\in \om{A}{\Se}$.
\section{Implicit signatures}
 Following a concept introduced in~\cite{Almeida&Costa&Teixeira:2010}, for a given implicit signature $\sigma$, we define  a \emph{$(\sigma,\D_k)$-expressible} signature as an implicit signature $\sigma'$ such that
\begin{enumerate} \vspace{-0.1cm}
 \item[i)]  $ \beta'_A(\omek{\sigma'}{A}{\Se} ) \subseteq \omek{\sigma}{B_k}{\Se} $ for any alphabet $A$;\vspace{-0.2cm}
\item[ii)] \label{betakterm} there is an algorithm that computes, from a given alphabet $A$ and a given $\sigma'$-term
$z\in T_A^{\sigma'}$,  a $\sigma$-term $t\in T_{B_k}^\sigma$ such that $\Se\models{\beta_A'}z= t$.
\end{enumerate}
\vspace{-0.1cm} Denote by $\mathcal E^\sigma$ the set of all $(\sigma,\D_k)$-expressible  signatures and notice
that this set is non-empty since it contains the trivial signature $\{\underline{\ }. \underline{\ }\}$.
 A signature $\sigma'\in\mathcal E^\sigma$ is said to be $\sigma$-maximal if $\omek{\sigma''}{A}{\Se}\subseteq \omek{\sigma'}{A}{\Se}$ for any signature $\sigma''\in\mathcal E^\sigma$ and any alphabet $A$. We notice that, if $\sigma$
 is highly computable, then $\mathcal E^\sigma$ contains highly computable $\sigma$-maximal elements $\sigma^*$
 and $\omek{\sigma^*}{A}{\Se}\subseteq \omek{\sigma}{A}{\Se}$ for every alphabet $A$~\cite[Propositions~4.1 and~4.10]{Almeida&Costa&Teixeira:2010}.

Throughout, $\sigma$ denotes a highly computable implicit signature and $\sigma^*$ denotes a highly computable
$\sigma$-maximal signature verifying
 the following conditions:
\begin{enumerate}[label=$(is.\arabic*)$]
  \vspace{-0.1cm}\item \label{is-1}  for every word $u\in A^+$,  there is a computable
 $\sigma$-term ${{\rm e}}_{u}\in T_A^{\sigma}$ such that $\Se\models {{\rm e}}_{u}=u^\omega$;
  \vspace{-0.2cm}\item \label{is-2} for each integer $k\geq 1$ and each $\sigma^*$-term $w\in T^{\sigma^*}_{A}$,  it is possible to compute a $\sigma^*$-term
  $\tau_w$  such that $\Se\models w=({\mathtt i}_k w)\tau_w$.
\end{enumerate}
\vspace{-0.1cm}
 For each non-empty word $u$ of length at most $k$, we fix a $\sigma^*$-term ${\rm e}_u$ in the conditions above. Note that ${\mathtt i}_k {\rm e}_u ={\mathtt i}_k u^\omega $ and ${\mathtt t}_k {\rm e}_u={\mathtt t}_k u^\omega$.

   Let $\nu:\om{B_k}{\Se}\rightarrow (\om{A^{k+1}}{\Se})^1$ be the continuous homomorphism such that, for $(w,a)\in B_k$, $\nu(w,a)=1$
   if $w\in A_{k-1}$ and $\nu(w,a)=wa$ if $w\in A^k$ . Hence, for every word $w=a_1\cdots a_n\in A^+$ with $n>k$,
   \begin{alignat*}{2}\nu\beta'_A w &  = \nu\left(\beta'_A (a_1 \cdots a_k)\cdot{}^{a_1 \cdots a_k}\beta'_A(a_{k+1}\cdots a_n)\right)  \\
   &= \nu\left(\beta'_A (a_1 \cdots a_k)\right)\nu\left( ^{a_1 \cdots a_k}\beta'_A(a_{k+1}\cdots a_n)\right) \\
  &= \nu\bigl((1,a_1)(a_1,a_2)\cdots (a_1\cdots a_{k-1},a_k)\bigr)    \nu\bigl((a_1\cdots a_k,a_{k+1})\ \cdots\  (a_{n-k}\cdots a_{n-1},a_n)\bigr) \\
    &  = \nu\bigl((a_1\cdots a_k,a_{k+1})\ \cdots\  (a_{n-k}\cdots a_{n-1},a_n)\bigr)\\
   & =\Phi_k w.\end{alignat*}
   As $\beta'_A,\ \Phi_k$ and $\nu$ are continuous
  functions, we conclude that $\nu \beta'_A=\Phi_k$. That is, the following diagram commutes:
   \begin{equation*}
  \xymatrix{
    \om{B_k}{ \Se}
    \ar[rd]^{\nu}
    \\
    \om{A}{\Se}
     \ar[u]^{\beta'_A}
      \ar[r]^{\Phi_k\ \ }
    &
   ( \om{ A^{k+1}}{\Se})^1
 }
\end{equation*}
  For $w=a_1\cdots a_n\in A^+$,  $\Phi_k w$ is a finite word with length $n-k$ if $n>k$ and it is the empty word otherwise. For
  $w\in T^{\sigma^*}_{A}\setminus A^+$, by~\eqref{betasobrep} and condition~\ref{is-2},
  $$\Phi_k w =\nu\beta'_A w=\nu (\beta'_A {\mathtt i}_k w) \nu  ( ^{{\mathtt i}_k w}  \beta'_A\tau_w)= \nu ( ^{{\mathtt i}_k w}
  \beta'_A\tau_w).$$
  Since $\sigma^*$ is $(\sigma,\D_k)$-expressible, it is possible to compute a $\sigma$-term on the alphabet $B_k$ that represents
  $\beta'_A\tau_w$. Now, $\alpha_{{\mathtt i}_k w}$ and $\nu$ restricted to ${\rm Im}\,\alpha_{{\mathtt i}_k w}$ are continuous homomorphisms that send letters to letters.
 So, it is possible to compute a $\sigma$-term on the alphabet $A^{k+1}$ that represents  $\nu( ^{{\mathtt i}_k w}
 \beta'_A\tau_w)$. This proves the following lemma.
 \begin{lemma}\label{phiequivbeta}
  Given a $\sigma^*$-term $w\in T^{\sigma^*}_{A}$,
      there is a computable  $\sigma$-term  $t\in T^{\sigma}_{A^{k+1}}$ such that $\Se\models \Phi_k w =t$.
 \end{lemma}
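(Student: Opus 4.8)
The plan is to split according to whether $w$ is a genuine word or a proper $\sigma^*$-term. If $w=a_1\cdots a_n\in A^+$, then $\Phi_k w$ is the finite word $(a_1\cdots a_{k+1})(a_2\cdots a_{k+2})\cdots(a_{n-k}\cdots a_n)$ over $A^{k+1}$ when $n>k$ and is the empty word otherwise; in either case $\Phi_k w$ is itself a computable $\sigma$-term over $A^{k+1}$, so we may take $t=\Phi_k w$ and there is nothing more to do. The work is therefore concentrated on the case $w\in T^{\sigma^*}_A\setminus A^+$, and the idea there is to massage $\Phi_k w$ into a form to which the $(\sigma,\D_k)$-expressibility of $\sigma^*$ applies directly.

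For such a $w$, first I would use condition \ref{is-2} to compute a $\sigma^*$-term $\tau_w$ with $\Se\models w=({\mathtt i}_k w)\tau_w$. Since $|{\mathtt i}_k w|\le k$ one has ${\mathtt t}_k({\mathtt i}_k w)={\mathtt i}_k w$, so \eqref{betasobrep} yields $\beta'_A w=\beta'_A({\mathtt i}_k w)\cdot{}^{{\mathtt i}_k w}\beta'_A\tau_w$. Composing with the homomorphism $\nu$ and using both the commutativity $\nu\beta'_A=\Phi_k$ and property (i) of the superposition homomorphism (which kills words of length at most $k$, so $\nu\beta'_A({\mathtt i}_k w)=\Phi_k({\mathtt i}_k w)=1$), I obtain
$$\Phi_k w=\nu\bigl({}^{{\mathtt i}_k w}\beta'_A\tau_w\bigr).$$
It remains to turn this right-hand side into a computable $\sigma$-term over $A^{k+1}$, which I would achieve in three effective steps, each replacing a continuous map by a substitution on $\sigma$-terms. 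First, since $\sigma^*$ is $(\sigma,\D_k)$-expressible, its defining property (ii) applied to the $\sigma^*$-term $\tau_w$ produces a $\sigma$-term $t_1\in T^\sigma_{B_k}$ with $\Se\models\beta'_A\tau_w=t_1$. Second, the map ${}^{{\mathtt i}_k w}(-)=\alpha_{{\mathtt i}_k w}$ is a continuous endomorphism of $\om{B_k}{\Se}$ sending each generator $(w',a)$ to the generator $\bigl({\mathtt t}_k(({\mathtt i}_k w)w'),a\bigr)$, hence acts on $\sigma$-terms by a computable relabelling of letters; applying it to $t_1$ gives $t_2\in T^\sigma_{B_k}$ with $\Se\models{}^{{\mathtt i}_k w}\beta'_A\tau_w=t_2$. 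Third, restricted to ${\rm Im}\,\alpha_{{\mathtt i}_k w}$, the homomorphism $\nu$ sends every generator either to a letter of $A^{k+1}$ or to the empty word, so (allowing empty $\sigma$-terms) it too acts on $\sigma$-terms by a computable relabelling; applying it to $t_2$ yields the desired $t\in T^\sigma_{A^{k+1}}$ with $\Se\models\nu({}^{{\mathtt i}_k w}\beta'_A\tau_w)=t$, i.e.\ $\Se\models\Phi_k w=t$. Tracking the three steps shows that $t$ is computable from $A$, $k$ and $w$.

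I expect the only genuinely load-bearing step to be the first one: in general $\beta'_A$ does not send $\sigma$-words to $\sigma$-words, which is precisely why one cannot argue with $\sigma$ alone and must pass to the $\sigma$-maximal $(\sigma,\D_k)$-expressible signature $\sigma^*$, and why the reduction above was arranged so that $\beta'_A$ is applied to the $\sigma^*$-term $\tau_w$ rather than to $w$ itself (whose ${\mathtt i}_k$-decomposition is what makes the factor $\Phi_k({\mathtt i}_k w)$ disappear). The remaining two steps are routine, since both maps involved send generators to generators or to the empty word and therefore descend to substitutions on $\sigma$-terms, with all effectivity inherited from the highly computable data $\sigma$ and $\sigma^*$ and from the integer $k$.
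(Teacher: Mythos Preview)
Your proof is correct and follows essentially the same route as the paper's: split off the word case, use \ref{is-2} to factor $w=({\mathtt i}_k w)\tau_w$, write $\Phi_k w=\nu\bigl({}^{{\mathtt i}_k w}\beta'_A\tau_w\bigr)$ via $\nu\beta'_A=\Phi_k$ and~\eqref{betasobrep}, and then successively turn $\beta'_A\tau_w$, $\alpha_{{\mathtt i}_k w}$ and $\nu$ into computable $\sigma$-term operations. The only difference is cosmetic: the paper observes that on ${\rm Im}\,\alpha_{{\mathtt i}_k w}$ the map $\nu$ is genuinely letter-to-letter (since $|{\mathtt i}_k w|=k$ in the non-word case), whereas you more cautiously allow for letters being sent to the empty word.
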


Let $X$ be an alphabet and let $\Sigma$ be a finite system of equations of the form $x=x'$ with $x,x'\in X$.
Consider the mapping $\beta'_X:\om{X}{\Se}\to\om{X_{k}^1 \times X}{\Se}$. Then
 $$\Sigma'=\{\beta'_X u=\beta'_X v: (u=v)\in\Sigma\}=\{(1,x)=(1,x') :(x=x')\in\Sigma\}$$
 is a set of equations over $X_{k}^1\times X$ of the same type of the equations of $\Sigma$ and with the same cardinal.
 Note also that the content  of the equations of $\Sigma'$ is a subset of $\{(1,x): x\in X\}$.   Consequently, if \V\ is $\sigma$-reducible for $\Sigma$,
  condition $(D_{\sigma,\sigma^*}^\Sigma)$
 of~\cite[Proposition 6.1]{Almeida&Costa&Teixeira:2010} holds. In this context, we can identify $\Sigma'$ with $\Sigma$
  and the following statement  is an instance of the above mentioned proposition.

\begin{proposition}   \label{p:reducibilidadepontuais}
  For an alphabet $X$, let $\Sigma$ be a finite system of equations of the form $x=x'$ with $x,x'\in X$.
  If \V\  is $\sigma$-reducible relatively to $\Sigma$, then $\V*\D_k$ is $\sigma^*$-reducible relatively to $\Sigma$.
\end{proposition}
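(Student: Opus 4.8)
The plan is to derive the statement as an instance of \cite[Proposition~6.1]{Almeida&Costa&Teixeira:2010}, whose hypotheses have essentially been assembled in the discussion preceding the statement, so that the proof amounts to verifying them in the present situation. That proposition reduces $\sigma^*$-reducibility of $\V*\D_k$ relative to a finite system $\Sigma$ over an alphabet $X$ to two ingredients: first, $\sigma$-reducibility of $\V$ relative to the associated system $\Sigma'=\{\beta'_X u=\beta'_X v:(u=v)\in\Sigma\}$ over $X_k^1\times X$; second, the descent condition $(D_{\sigma,\sigma^*}^\Sigma)$, which governs the passage from a $\sigma$-word solution of $\Sigma'$ over $\V$ back to a $\sigma^*$-word solution of $\Sigma$ over $\V*\D_k$ respecting the original pair $(\varphi,\delta)$. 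This passage is controlled, on the one hand, by the identity $\nu\beta'_A=\Phi_k$ together with Proposition~\ref{prop:char-psid-VD} (which turns an equality over $\V*\D_k$ into the data ${\mathtt i}_k$, ${\mathtt t}_k$ and an equality of $\Phi_k$-images over $\V$), and, on the other hand, by the effectiveness provided by $\sigma^*$ being $(\sigma,\D_k)$-expressible, by conditions~\ref{is-1} and \ref{is-2}, and by Lemma~\ref{phiequivbeta}.

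To obtain the first ingredient, note that every equation of $\Sigma$ has the form $x=x'$ with $x,x'\in X$ and that $\beta'_X$ sends each letter $x$ to the letter $(1,x)$; hence $\Sigma'=\{(1,x)=(1,x'):(x=x')\in\Sigma\}$, as already recorded. The assignment $x\mapsto(1,x)$ is a bijection of $X$ onto the content of $\Sigma'$, and under it $\Sigma'$ is literally $\Sigma$ --- a pointlike system of word equations of the same cardinality. Consequently, $\sigma$-reducibility of $\V$ relative to $\Sigma$ coincides with $\sigma$-reducibility of $\V$ relative to $\Sigma'$, which is precisely the first hypothesis of the cited proposition.

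For the second ingredient, the crucial point is that the content of $\Sigma'$ is contained in the length-zero slice $\{(1,x):x\in X\}$ of $X_k^1\times X$, so the letters occurring in $\Sigma'$ are fixed by every $\D_k$-action $\alpha_w$. This is exactly the configuration in which $(D_{\sigma,\sigma^*}^\Sigma)$ is automatically satisfied once $\V$ is $\sigma$-reducible relative to $\Sigma$: the combinatorial obstruction that $(D_{\sigma,\sigma^*}^\Sigma)$ is designed to preclude in the general graph-system case simply does not arise here. I would make this explicit by quoting the precise form of $(D_{\sigma,\sigma^*}^\Sigma)$ from \cite{Almeida&Costa&Teixeira:2010} and checking it term by term, using Lemma~\ref{phiequivbeta} for the effective passage from $\sigma^*$-terms on $X$ to $\sigma$-terms on $X^{k+1}$, and condition~\ref{is-2} to split a $\sigma^*$-term $w$ as $({\mathtt i}_k w)\tau_w$.

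With the two hypotheses in place, \cite[Proposition~6.1]{Almeida&Costa&Teixeira:2010} applies and yields that $\V*\D_k$ is $\sigma^*$-reducible relative to $\Sigma$. The main point requiring care is the bookkeeping in the second ingredient --- reconciling the constraints $(\varphi,\delta)$ of the $\V*\D_k$-statement with those of the $\V$-statement for $\Sigma'$ through the embedding $\iota$ and the factorization $\nu\beta'_A=\Phi_k$ --- but this is routine given the ``content of $\Sigma'$ in $\{(1,x):x\in X\}$'' reduction already noted, and the real substance of the argument lies in \cite{Almeida&Costa&Teixeira:2010}.
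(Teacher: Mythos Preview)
Your proposal is correct and follows essentially the same approach as the paper. The paper's argument is simply the paragraph preceding the proposition: compute $\Sigma'=\{(1,x)=(1,x'):(x=x')\in\Sigma\}$, observe that its content lies in $\{(1,x):x\in X\}$ so that $\Sigma'$ may be identified with $\Sigma$, conclude that $\sigma$-reducibility of $\V$ for $\Sigma$ gives condition $(D_{\sigma,\sigma^*}^\Sigma)$, and invoke \cite[Proposition~6.1]{Almeida&Costa&Teixeira:2010}; you do the same, with somewhat more elaboration on how the descent condition is to be verified. One small caution: your remark that the letters $(1,x)$ are ``fixed by every $\D_k$-action $\alpha_w$'' is not literally true (indeed $\alpha_w(1,x)=({\mathtt t}_k w,x)$), so when you carry out the term-by-term check of $(D_{\sigma,\sigma^*}^\Sigma)$ you should rely directly on the identification $\Sigma'\cong\Sigma$ rather than on that heuristic.
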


\section{Transforming $\V*\D_k$-solutions into $\V*\D$-solutions}
The objective of this section is to build a function $\theta'_k$ that will be used to convert  $(\V*\D_k,\sigma^*)$-solutions of a pointlike system of equations into $(\V*\D,\sigma)$-solutions of the same system.

Let $S$ be a finite $A$-generated semigroup and denote by $\delta$ the extension of
the corresponding generating mapping $A\rightarrow S$ to an onto continuous
homomorphism $\om{A}{\Se}\to S$.  Let $k$ be a natural number  such that $|S|< k$.
For each word $a_1\cdots a_k\in A^+$ of length $k$, we fix the minimum  $j\in\{2,\ldots,k\}$ such that $\delta(a_1\cdots a_{i-1})=\delta(a_1\cdots a_{j})$ for some  $i\in\{2,\ldots,j\}$, and notice that $\delta(a_1\cdots a_{i-1})=\delta\bigl(a_1\cdots a_{i-1}(a_i\cdots a_{j})^{\omega+1}\bigr)$.  We fix next the minimum such $i$, so that $i$ and $j$ are unique, well-determined and verify
\begin{equation}\label{eq:ij}
 \delta(a_1\cdots a_{j})=\delta\bigl(a_1\cdots a_{j} (a_{i}\cdots  a_j)^{\omega}\bigr).
\end{equation}

Consider now a finite word $a_1\cdots a_n$ with $n\geq k$ and note that it has $r=n-k+1$ factors of length $k$.  For each $\ell\in\{1,\ldots , r\}$, let $u_\ell= a_{i_\ell}\cdots  a_{j_\ell}$  where $i_\ell$ and $j_\ell$ are the indices fixed above for the length $k$ word $a_\ell \cdots a_{\ell+k-1}$. So $\delta(a_\ell\cdots a_{j_{\ell}})=\delta(a_\ell\cdots a_{j_\ell} u_\ell^{\omega})$ and $\ell< i_\ell\le j_\ell$.
We claim that  $j_\ell\le  j_{\ell+1}$. Indeed, suppose that $ j_{\ell+1}<j_\ell$.  By definition of $i_{\ell+1}$ and $j_{\ell+1}$,  $\delta(a_{\ell+1}\cdots a_{i_{\ell+1}-1})=\delta(a_{\ell+1}\cdots a_{j_{\ell+1}})$. Hence $\delta(a_{\ell}\cdots a_{i_{\ell+1}-1})=\delta(a_{\ell}\cdots a_{j_{\ell+1}})$. This contradicts the minimality of $j_\ell$ and, so, the claim is true. Therefore,
\begin{alignat}{2}
\delta(a_1\cdots a_{n})= &\ \delta(a_1\cdots a_{j_1} u_1^{\omega}a_{j_1+1}\cdots a_{j_2}u_2^{\omega}a_{j_2+1}
    \cdots  a_{j_r}u_{r}^\omega a_{j_{r}+1}\cdots  a_n)\notag \\
    = &\ \delta(a_1\cdots a_{j_1} {\rm e}_{u_1} a_{j_1+1}\cdots a_{j_2}{\rm e}_{u_2}a_{j_2+1}
    \cdots a_{j_r} {\rm e}_{u_{r}}  a_{j_{r}+1}\cdots  a_n). \label{deltaw}
    \end{alignat}

  With the above notation, consider the  functions
$$ \begin{array}{rrcl}
 \lambda_k:\hspace{-4mm}  &A^+ &    \rightarrow & \omek{\sigma}{A}{\Se}\\
   &  a_1\cdots a_{n} & \mapsto &\biggl\{\begin{array}{ll}
     a_1\cdots a_{n}  &\mbox{ if }n< k\\
  a_1 \cdots a_{j_1} {{\rm e}}_{u_1}  & \mbox{ if }n \ge k
  \end{array}
  \end{array}$$
     and
     $$\begin{array}{rrcl}
\varrho_k:\hspace{-4mm} & A^+ &    \rightarrow & (\omek{\sigma}{A}{\Se})^1\\
   &  a_1\cdots a_{n} & \mapsto &\biggl\{\begin{array}{ll}
     1  &\mbox{ if }n< k\\
  {{\rm e}}_{u_{r}}  a_{j_{r}+1}\cdots  a_n & \mbox{ if }n \ge k
  \end{array}.
  \end{array}$$

Note that for every $w\in A^+$, $\lambda_k w =\lambda_k {\mathtt i}_k w$ and  $\varrho_k w=\varrho_k {\mathtt
t}_k w$.
\begin{lemma}
For each  $k\in\mathbb N$, there exist unique continuous functions $\om{A}{\Se} \rightarrow  \omek{\sigma}{A}{\Se}$ and
 $\om{A}{\Se} \rightarrow  (\omek{\sigma}{A}{\Se})^1$  extending $\lambda_k$  and $\varrho_k$, respectively, that will also be denoted by $\lambda_k$ and $\varrho_k$.
\end{lemma}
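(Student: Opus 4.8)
The plan is to realise the two extensions explicitly, by factoring $\lambda_k$ and $\varrho_k$ through finite discrete quotients of $\om{A}{\Se}$. The starting point is the remark recorded just before the statement: for every $w\in A^+$ one has $\lambda_k w=\lambda_k\,\mathtt{i}_k w$ and $\varrho_k w=\varrho_k\,\mathtt{t}_k w$. In other words $\lambda_k$ depends on a word only through its longest prefix of length $\le k$, and $\varrho_k$ only through its longest suffix of length $\le k$, so both maps descend to functions defined on the finite set $A_k$.

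First I would identify $\mathtt{i}_k$ and $\mathtt{t}_k$ with the natural projections onto $A_k$. Since $\K_k$ and $\D_k$ are order-computable, $\om{A}{\K_k}=\ome{A}{\K_k}$ and $\om{A}{\D_k}=\ome{A}{\D_k}$ are finite, and under the identifications $\ome{A}{\K_k}=\ome{A}{\D_k}=A_k$ recalled in Section~\ref{section:preliminaries} the maps $\pi\mapsto\mathtt{i}_k\pi$ and $\pi\mapsto\mathtt{t}_k\pi$ on $\om{A}{\Se}$ are exactly the natural projections $p_{\K_k},p_{\D_k}\colon\om{A}{\Se}\to A_k$; in particular they are continuous, with values in a \emph{finite discrete} set.

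Next I would set $\tilde\lambda_k\colon A_k\to\omek{\sigma}{A}{\Se}$ and $\tilde\varrho_k\colon A_k\to(\omek{\sigma}{A}{\Se})^1$ to be the restrictions of $\lambda_k$ and $\varrho_k$ to $A_k$; these are automatically continuous because $A_k$ is discrete. Then the composites $\tilde\lambda_k\circ p_{\K_k}\colon\om{A}{\Se}\to\omek{\sigma}{A}{\Se}$ and $\tilde\varrho_k\circ p_{\D_k}\colon\om{A}{\Se}\to(\omek{\sigma}{A}{\Se})^1$ are continuous, and the opening remark shows that on $A^+$ they take the values $\tilde\lambda_k(\mathtt{i}_k w)=\lambda_k w$ and $\tilde\varrho_k(\mathtt{t}_k w)=\varrho_k w$, i.e. they extend $\lambda_k$ and $\varrho_k$. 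Uniqueness is then immediate: $A^+=\ome{A}{\Se}$ is dense in the compact space $\om{A}{\Se}$, and the codomains $\omek{\sigma}{A}{\Se}$ and $(\omek{\sigma}{A}{\Se})^1$ are Hausdorff, being subspaces of $\om{A}{\Se}$ and $(\om{A}{\Se})^1$ respectively, so a continuous map on $\om{A}{\Se}$ is determined by its restriction to $A^+$.

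There is no genuine obstacle here — the argument is routine topology. The only step worth isolating is the first one: observing that $\lambda_k$ and $\varrho_k$, although they are not homomorphisms, are constant on the fibres of the natural projections onto the finite semigroups $\ome{A}{\K_k}$ and $\ome{A}{\D_k}$, which is precisely what turns the continuous extension problem into the composition of two maps that are continuous for trivial reasons.
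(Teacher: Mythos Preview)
Your proof is correct and follows essentially the same approach as the paper: both rest on the observation that $\lambda_k$ and $\varrho_k$ factor through the finite projections $p_{\K_k}$ and $p_{\D_k}$ onto the discrete set $A_k$. The only difference is presentational---you construct the extension explicitly as the composite $\tilde\lambda_k\circ p_{\K_k}$, while the paper phrases the same fact as ``$\lambda_k$ sends Cauchy sequences to eventually constant sequences''---but the underlying idea is identical.
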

\begin{proof}
Recall that $\ome{A}{\Se}=A^+$ is a dense subset of $\om{A}{\Se}$. Let $(w_m)_m$ be a Cauchy sequence in $A^+$.
Since $\om{A}{\K_k}(=\ome{A}{\K_k})$ is a finite semigroup and $p_{\K_k}:\om{A}{\Se}\to \om{A}{\K_k}$ is a
continuous homomorphism, there exists $m_0\in\mathbb N$  such that $p_{\K_k} w_m=p_{\K_k} w_{m_0}$ for every
$m\ge m_0$. Hence, ${\mathtt i}_k w_{m}={\mathtt i}_k w_{m_0}$ and $\lambda_k w_m=\lambda_k w_{m_0}$ for every
$m\geq m_0$. As a consequence, $\lambda_k$ has a unique continuous extension
  $\lambda_k:\om{A}{\Se} \rightarrow  \omek{\sigma}{A}{\Se}$. Moreover, $\lambda_k \pi =\lambda_k {\mathtt i}_k \pi$ for
  every $\pi\in\om{A}{\Se}$. That $\varrho_k$ has a unique continuous extension, defined by $\varrho_k \pi =\varrho_k {\mathtt t}_k \pi$
  for every $\pi\in\om{A}{\Se}$, can be shown in a similar way using the pseudovariety $\D_k$.
\end{proof}

Let  $\psi_k:(\om{A^{k+1}}{\Se})^1\rightarrow (\om{A}{\Se})^1$ be the unique continuous monoid homomorphism which
extends the mapping
\begin{alignat*}{2}
      A^{k+1} &    \rightarrow \omek{\sigma}{A}{\Se}
    \\    a_1\cdots a_{k+1} & \mapsto
   {{\rm e}}_{u_1} a_{j_1+1}\cdots a_{j_2}{{\rm e}}_{u_2}
   \end{alignat*}
and denote by $\theta_k$ the function $\theta_k=\psi_k  \Phi_k:\om{A}{\Se}\to (\om{A}{\Se})^1$. This is a
continuous $k$-superposition homomorphism since it is the composition of a
 continuous $k$-superposition homomorphism with a continuous homomorphism.  Finally, we define a mapping $\theta'_k: \om{A}{\Se}\to \om{A}{\Se}$
 by letting
 $$\theta'_k\pi=(\lambda_k \pi)  (\theta_k \pi)   (\varrho_k\pi)$$
 for every $\pi\in\om{A}{\Se}$.
 \begin{lemma}\label{lemma:representation_theta'_k}
  Let $w\in \omek{\sigma^*}{A}{\Se}$.   A representation of $\theta'_k w$  as a $\sigma$-term may
be computed from a given representation of $w$ as a $\sigma^*$-term.
 \end{lemma}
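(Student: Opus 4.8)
I would prove the lemma by computing, one at a time, the three factors of $\theta'_k w=(\lambda_k w)(\theta_k w)(\varrho_k w)$ as $\sigma$-terms, and then concatenating them.

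First, for the outer factors, recall that $\lambda_k w=\lambda_k{\mathtt i}_k w$ and $\varrho_k w=\varrho_k{\mathtt t}_k w$, so it is enough to produce the words ${\mathtt i}_k w$ and ${\mathtt t}_k w$ of $A_k$. Since the pseudovarieties $\K_k$ and $\D_k$ are order-computable and $\sigma^*$ is highly computable, these words are obtained effectively by evaluating the given $\sigma^*$-term $w$ in the finite semigroups $\ome{A}{\K_k}$ and $\ome{A}{\D_k}$. Writing ${\mathtt i}_k w=a_1\cdots a_n$: if $n<k$ then $\lambda_k w=a_1\cdots a_n$ is already a $\sigma$-term, and if $n=k$ then, by running through the finitely many equalities of the form $\delta(a_1\cdots a_{i-1})=\delta(a_1\cdots a_j)$ in the finite semigroup $S$, one finds the unique pair $i_1\le j_1$ determined by~\eqref{eq:ij}, so that $\lambda_k w=a_1\cdots a_{j_1}{\rm e}_{u_1}$ with $u_1=a_{i_1}\cdots a_{j_1}$; as ${\rm e}_{u_1}$ is one of the fixed computable $\sigma$-terms supplied by~\ref{is-1}, this exhibits $\lambda_k w$ as a computable $\sigma$-term. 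The factor $\varrho_k w$ is produced from ${\mathtt t}_k w$ in exactly the same fashion.

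Next, for the middle factor $\theta_k w=\psi_k\Phi_k w$, Lemma~\ref{phiequivbeta} computes from $w$ a $\sigma$-term $t\in T^\sigma_{A^{k+1}}$ with $\Se\models\Phi_k w=t$. The homomorphism $\psi_k$ sends each of the finitely many letters $a_1\cdots a_{k+1}$ of $A^{k+1}$ to the $\sigma$-term ${\rm e}_{u_1}a_{j_1+1}\cdots a_{j_2}{\rm e}_{u_2}$, whose indices are again located by finitely many tests in $S$, so these images are fixed computable $\sigma$-terms. Because $\psi_k$ is a continuous homomorphism of profinite semigroups it is a morphism of $\sigma$-semigroups, hence substituting in $t$ each letter by the corresponding $\sigma$-term yields a $\sigma$-term representing $\psi_k t=\theta_k w$, and this substitution is effective. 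Concatenating the three $\sigma$-terms just computed then gives the desired representation of $\theta'_k w$ (with the usual convention that an empty factor is the empty $\sigma$-term).

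I do not expect a genuine obstacle here: the statement is essentially an assembly of Lemma~\ref{phiequivbeta} with effective bookkeeping. The points deserving a word of care are that ${\mathtt i}_k$ and ${\mathtt t}_k$ of a $\sigma^*$-term are computable (which uses order-computability of $\K_k$ and $\D_k$ together with high computability of $\sigma^*$), that the indices $(i_\ell,j_\ell)$ underlying the definitions of $\lambda_k$, $\varrho_k$ and $\psi_k$ are determined by finitely many membership tests in the finite semigroup $S$, and that applying $\psi_k$ to a $\sigma$-term reduces to a plain, effective substitution precisely because $\psi_k$ respects the $\sigma$-algebra structure.
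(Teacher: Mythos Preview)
Your proposal is correct and follows essentially the same route as the paper: compute $\theta_k w$ via Lemma~\ref{phiequivbeta} and a letter-by-letter substitution along the continuous homomorphism $\psi_k$, compute $\lambda_k w$ and $\varrho_k w$ from ${\mathtt i}_k w$ and ${\mathtt t}_k w$ using~\ref{is-1}, and concatenate. You supply a bit more detail than the paper (notably the effective computation of ${\mathtt i}_k w$, ${\mathtt t}_k w$ via order-computability of $\K_k$, $\D_k$, and the explicit justification that $\psi_k$ respects the $\sigma$-structure), but the argument is the same.
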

 \begin{proof} By~\ref{is-2} and Lemma~\ref{phiequivbeta}, given a representation of $w$ as a $\sigma^*$-term,
  one can calculate a $\sigma$-term on the alphabet $A^{k+1}$ which represents $\Phi_k w$.
 As  $\psi_k$ is a continuous homomorphism that sends letters to $\sigma$-terms on $A$, it is then possible to compute a
 $\sigma$-term on $A$ representing $\theta_k w $.
On the other hand,  $\lambda_k w$ and $\varrho_k w$  are respectively represented by  $\sigma$-terms of the form
$v {\rm e}_u$ and ${\rm e}_u v$ with $v\in A^*$ and $u\in A^+$, and it is easy to verify that these
$\sigma$-terms can be computed from~\ref{is-1} given ${\mathtt i}_k w$ and ${\mathtt t}_k w$. The result follows
from the definition of $\theta'_k$.
\end{proof}

 An obvious consequence of the last lemma is that  $\theta'_k (\omek{\sigma^*}{A}{\Se}) \subseteq \omek{\sigma}{A}{\Se}$.
 Let us now prove that the mapping $\theta'_k$ preserves the value over the fixed finite semigroup $S$.
 \begin{proposition}\label{prop:same-value-S}
 Let $\pi\in \om{A}{\Se}$. Then $\delta  \theta_k' \pi=\delta \pi$.
 \end{proposition}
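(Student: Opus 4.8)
The plan is to reduce the claim $\delta\theta'_k\pi=\delta\pi$ to the dense case $\pi=w\in A^+$ and, in that case, to trace through the definitions of $\lambda_k$, $\theta_k=\psi_k\Phi_k$ and $\varrho_k$ so that the product $(\lambda_k w)(\theta_k w)(\varrho_k w)$ becomes, after applying $\delta$, exactly the right‑hand side of the telescoping identity~\eqref{deltaw}. First I would observe that $\delta$, $\theta'_k$, $\lambda_k$, $\varrho_k$ and $\theta_k$ are all continuous, and that $A^+$ is dense in $\om{A}{\Se}$; hence it suffices to prove $\delta\theta'_k w=\delta w$ for every word $w=a_1\cdots a_n\in A^+$, the general case following by continuity. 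For $n<k$ this is immediate, since then $\lambda_k w=w$, $\Phi_k w=1$ (so $\theta_k w=1$) and $\varrho_k w=1$, giving $\theta'_k w=w$.

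Now assume $n\ge k$ and put $r=n-k+1$, with the indices $i_\ell<j_\ell$ and words $u_\ell=a_{i_\ell}\cdots a_{j_\ell}$ as fixed before~\eqref{deltaw}. By definition $\lambda_k w=a_1\cdots a_{j_1}{\rm e}_{u_1}$ and $\varrho_k w={\rm e}_{u_r}a_{j_r+1}\cdots a_n$. The main point is to identify $\theta_k w=\psi_k\Phi_k w$. Here $\Phi_k w=(a_1\cdots a_{k+1})(a_2\cdots a_{k+2})\cdots(a_{r-1}\cdots a_n)$ is the word of length $n-k=r-1$ listing the consecutive factors of length $k+1$ of $w$. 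Applying the homomorphism $\psi_k$, whose value on the letter $a_\ell\cdots a_{\ell+k}\in A^{k+1}$ is the $\sigma$‑word ${\rm e}_{u_\ell}a_{j_\ell+1}\cdots a_{j_{\ell+1}}{\rm e}_{u_{\ell+1}}$, and noting that in this concatenation consecutive factors overlap in the block ${\rm e}_{u_{\ell+1}}$, one sees the product telescopes to
$$
\theta_k w={\rm e}_{u_1}a_{j_1+1}\cdots a_{j_2}{\rm e}_{u_2}a_{j_2+1}\cdots a_{j_{r-1}+1}\cdots a_{j_r}{\rm e}_{u_r}.
$$
(Here the claim $j_\ell\le j_{\ell+1}$ proved before~\eqref{deltaw} is what makes the blocks $a_{j_\ell+1}\cdots a_{j_{\ell+1}}$ legitimate, possibly empty, words; this monotonicity is the technical hinge of the whole computation.) Multiplying on the left by $\lambda_k w=a_1\cdots a_{j_1}{\rm e}_{u_1}$ and on the right by $\varrho_k w={\rm e}_{u_r}a_{j_r+1}\cdots a_n$, and using that ${\rm e}_{u_\ell}$ is an idempotent $\sigma$‑word representing $u_\ell^\omega$ (so that a doubled block ${\rm e}_{u_\ell}{\rm e}_{u_\ell}$ collapses to ${\rm e}_{u_\ell}$), we obtain
$$
\theta'_k w=a_1\cdots a_{j_1}{\rm e}_{u_1}a_{j_1+1}\cdots a_{j_2}{\rm e}_{u_2}a_{j_2+1}\cdots a_{j_r}{\rm e}_{u_r}a_{j_r+1}\cdots a_n,
$$
which is precisely the $\sigma$‑term appearing on the right of~\eqref{deltaw}. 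Applying $\delta$ and invoking~\eqref{deltaw} yields $\delta\theta'_k w=\delta w$, and density finishes the proof.

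The step I expect to be the main obstacle is the bookkeeping in the telescoping of $\psi_k\Phi_k w$: one has to be careful about the boundary factors (the first letter block $a_1\cdots a_{j_1}$ comes from $\lambda_k$, not from $\psi_k$, and likewise the tail $a_{j_r+1}\cdots a_n$ from $\varrho_k$), about the overlapping ${\rm e}_{u_\ell}$ blocks being merged via idempotency, and about the degenerate cases where some $u_\ell$ has $j_\ell<k$ or where $r=1$. Handling $r=1$ separately ($\Phi_k w=1$, so $\theta_k w=1$ and $\theta'_k w=(a_1\cdots a_{j_1}{\rm e}_{u_1})({\rm e}_{u_1}a_{j_1+1}\cdots a_n)=a_1\cdots a_{j_1}{\rm e}_{u_1}a_{j_1+1}\cdots a_n$) keeps the general argument clean. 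Everything else is routine once the monotonicity $j_\ell\le j_{\ell+1}$ and the idempotency of the ${\rm e}_{u_\ell}$ are in hand.
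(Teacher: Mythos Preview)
Your proposal is correct and follows essentially the same route as the paper's proof: reduce to $\pi\in A^+$ by density and continuity, dispose of $n<k$ trivially, and for $n\ge k$ expand $\theta_k w=\psi_k\Phi_k w$, telescope via the idempotency of the ${\rm e}_{u_\ell}$, attach $\lambda_k w$ and $\varrho_k w$, and read off~\eqref{deltaw}. The only cosmetic difference is that the paper folds the case $n=k$ into the general $n\ge k$ computation (using $\theta_k\pi=1$ there) rather than treating $r=1$ separately as you do.
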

 \begin{proof}
 Since $\theta'_k$ and $\delta$ are continuous functions and $A^+$ is dense in $\om{A}{\Se}$, it suffices to prove the result for $\pi=a_1\cdots a_n\in A^+$.
 For $n\leq k$ one has $\theta_k \pi=\psi_k 1=1$, while for $n>k$ one has, with the notations of~\eqref{deltaw},
  \begin{alignat*}{2}\theta_k \pi & =  \psi_k (a_1\cdots a_{k+1},a_2\cdots a_{k+2},\ldots , a_{r-1}\cdots a_n)\\
 & =  {\rm e}_{u_1} a_{j_1+1} \cdots    a_{j_2} {\rm e}_{u_2} \cdot  {\rm e}_{u_2}  a_{j_2+1} \cdots    a_{j_3} {\rm e}_{u_3}\cdot  \ldots\cdot
           {\rm e}_{u_{r-1}} a_{j_{r-1}+1} \cdots    a_{j_{r}} {\rm e}_{u_{r}}  \\
   & =   {\rm e}_{u_1} a_{j_1+1}\cdots a_{j_2}{\rm e}_{u_2} a_{j_2+1} \cdots    a_{j_{r-1}} {\rm e}_{u_{r-1}} a_{j_{r-1}+1} \cdots    a_{j_{r}} {\rm e}_{u_{r}}.\end{alignat*}
 Thus, in case $n<k$, one deduces that $\theta'_k\pi= \lambda_k \pi\cdot \theta_k \pi\cdot \varrho_k \pi= \pi\cdot 1\cdot 1=\pi$ and, so,  $\delta  \theta_k' \pi=\delta \pi$ holds certainly in this case. For $n\geq k$, we have
 \begin{alignat*}{2}\theta'_k \pi & = \lambda_k (a_1\cdots a_k)\cdot \theta_k \pi\cdot \varrho_k (a_{r-1}\cdots a_n)\\
   & =  a_1\cdots a_{j_1} {\rm e}_{u_1} \cdot \theta_k \pi\cdot  {\rm e}_{u_{r}} a_{j_{r}+1}\cdots  a_n\\
   & =  a_1\cdots a_{j_1}  {\rm e}_{u_1} a_{j_1+1}\cdots a_{j_2}{\rm e}_{u_2} a_{j_2+1} \cdots   a_{j_{r}} {\rm e}_{u_{r}} a_{j_{r}+1}\cdots  a_n
   \end{alignat*}
 and so, by~\eqref{deltaw}, the equality $\delta  \theta_k' \pi=\delta \pi$ holds also in this case.  Therefore, the proposition is true.
  \end{proof}

  Let us now show the following fundamental property of the function $\theta_k'$.
 \begin{proposition}\label{prop:existenciasolucaoVxD}
 Let $\pi,\, \rho\in \omek{\sigma^*}{A}{\Se}$ be such that $\V*\D_k\models \pi=\rho$. Then  $\V*\D\models \theta'_k \pi =\theta'_k \rho $.
 \end{proposition}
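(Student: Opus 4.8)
The plan is to unfold the hypothesis by Proposition~\ref{prop:char-psid-VD} and then verify the two conditions in the characterization~\eqref{eq:char-psid-VD} of the pseudoidentities of $\V*\D$: that $\LI\models\theta'_k\pi=\theta'_k\rho$ and that $\V\models\Phi_\ell\theta'_k\pi=\Phi_\ell\theta'_k\rho$ for every $\ell\geq1$. By Proposition~\ref{prop:char-psid-VD}, the assumption $\V*\D_k\models\pi=\rho$ means ${\mathtt i}_k\pi={\mathtt i}_k\rho$, ${\mathtt t}_k\pi={\mathtt t}_k\rho$ and $\V\models\Phi_k\pi=\Phi_k\rho$. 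If ${\mathtt i}_k\pi$ has length smaller than $k$, then $\pi={\mathtt i}_k\pi={\mathtt i}_k\rho=\rho$ and there is nothing to prove; so I assume that $p:={\mathtt i}_k\pi={\mathtt i}_k\rho$ and $s:={\mathtt t}_k\pi={\mathtt t}_k\rho$ have length $k$. Write $u_\iota,w_\iota$ (respectively $u_\tau,w_\tau$) for the word $u$ and the prefix $a_1\cdots a_j$ (respectively the suffix $a_{j+1}\cdots a_k$) attached to $p$ (respectively $s$) as in~\eqref{eq:ij}, so that $\lambda_k p=w_\iota\,{\rm e}_{u_\iota}$ and $\varrho_k s={\rm e}_{u_\tau}\,w_\tau$.

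The computation already carried out in the proof of Proposition~\ref{prop:same-value-S} shows that, for a word $\pi$ of length at least $k$, one has $\theta'_k\pi=w_\iota\cdot\theta_k\pi\cdot w_\tau$ and $\theta_k\pi={\rm e}_{u_\iota}\cdot\psi'_k(\Phi_k\pi)$, where $\theta_k\pi$ begins with the idempotent ${\rm e}_{u_\iota}$ and ends with the idempotent ${\rm e}_{u_\tau}$, and where $\psi'_k:\om{A^{k+1}}{\Se}\to(\om{A}{\Se})^1$ is the continuous homomorphism sending a letter $a_1\cdots a_{k+1}$ to $a_{j_1+1}\cdots a_{j_2}\,{\rm e}_{u_2}$, with $u_1,j_1$ determined from $a_1\cdots a_k$ and $u_2,j_2$ from $a_2\cdots a_{k+1}$; the point is that consecutive length-$(k+1)$ factors of $\pi$ overlap in $k$ letters, so that their associated data coincide and the idempotents at the junctions of $\theta_k\pi$ are absorbed. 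Since both sides of these identities are continuous in $\pi$ and the data $u_\iota,w_\iota,u_\tau,w_\tau$ are locally constant on the clopen set $\{\pi\in\om{A}{\Se}:|{\mathtt i}_k\pi|=k\}$, they extend by density to every $\pi$ in that set, in particular to our $\pi$ and $\rho$. Now $\theta'_k\pi=\lambda_k\pi\cdot\theta_k\pi\cdot\varrho_k\pi$ begins with $\lambda_k\pi=\lambda_k p=w_\iota\,{\rm e}_{u_\iota}$ and ends with $\varrho_k\pi=\varrho_k s={\rm e}_{u_\tau}\,w_\tau$; as ${\rm e}_{u_\iota}$ and ${\rm e}_{u_\tau}$ are idempotents, this forces ${\mathtt i}_\ell\theta'_k\pi={\mathtt i}_\ell\lambda_k p$ and ${\mathtt t}_\ell\theta'_k\pi={\mathtt t}_\ell\varrho_k s$ for all $\ell\geq1$, quantities that depend only on $p$ and $s$. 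Hence ${\mathtt i}_\ell\theta'_k\pi={\mathtt i}_\ell\theta'_k\rho$ and ${\mathtt t}_\ell\theta'_k\pi={\mathtt t}_\ell\theta'_k\rho$ for every $\ell$, which means $\K\models\theta'_k\pi=\theta'_k\rho$ and $\D\models\theta'_k\pi=\theta'_k\rho$, hence $\LI\models\theta'_k\pi=\theta'_k\rho$.

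It remains to fix $\ell\geq1$ and show $\V\models\Phi_\ell\theta'_k\pi=\Phi_\ell\theta'_k\rho$. First, applying the superposition identities of $\Phi_\ell$ to $\Phi_\ell(w_\iota\cdot\theta_k\pi\cdot w_\tau)$ and using that $\theta_k\pi$ begins and ends with idempotents, one peels off a prefix and a suffix and obtains $\Phi_\ell\theta'_k\pi=z_1\cdot\Phi_\ell(\theta_k\pi)\cdot z_2$, where $z_1=\Phi_\ell(w_\iota\cdot{\mathtt i}_\ell{\rm e}_{u_\iota})$ and $z_2=\Phi_\ell({\mathtt t}_\ell{\rm e}_{u_\tau}\cdot w_\tau)$ depend only on $p$ and $s$. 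Second, writing $\psi'_k(\Phi_k\pi)$ as the product of the $\psi'_k$-images of the length-$(k+1)$ factors of $\pi$ --- each of those images ending in an idempotent --- and splitting $\Phi_\ell$ successively at these idempotents, one gets from $\theta_k\pi={\rm e}_{u_\iota}\psi'_k(\Phi_k\pi)$ that $\Phi_\ell(\theta_k\pi)=z_3\cdot\Psi_\ell(\Phi_k\pi)$, where $z_3=\Phi_\ell({\rm e}_{u_\iota})$ depends only on $p$ and $\Psi_\ell:(\om{A^{k+1}}{\Se})^1\to(\om{A^{\ell+1}}{\Se})^1$ is the continuous homomorphism determined by $\Psi_\ell(a_1\cdots a_{k+1})=\Phi_\ell\bigl(({\mathtt t}_\ell{\rm e}_{u_1})\,a_{j_1+1}\cdots a_{j_2}\,{\rm e}_{u_2}\bigr)$. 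Consequently $\Phi_\ell\theta'_k\pi=z_1z_3\cdot\Psi_\ell(\Phi_k\pi)\cdot z_2$ and, with the same $z_1,z_2,z_3$, $\Phi_\ell\theta'_k\rho=z_1z_3\cdot\Psi_\ell(\Phi_k\rho)\cdot z_2$. Being a continuous homomorphism, $\Psi_\ell$ induces a homomorphism $\om{A^{k+1}}{\V}\to\om{A^{\ell+1}}{\V}$, so $\V\models\Phi_k\pi=\Phi_k\rho$ yields $\V\models\Psi_\ell(\Phi_k\pi)=\Psi_\ell(\Phi_k\rho)$ and therefore $\V\models\Phi_\ell\theta'_k\pi=\Phi_\ell\theta'_k\rho$. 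Together with the previous paragraph and~\eqref{eq:char-psid-VD}, this yields $\V*\D\models\theta'_k\pi=\theta'_k\rho$.

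The main obstacle is the bookkeeping in the previous paragraph: verifying the two displayed factorizations, checking that $\Phi_\ell\bigl(({\mathtt t}_\ell{\rm e}_{u_1})\,a_{j_1+1}\cdots a_{j_2}\,{\rm e}_{u_2}\bigr)$ depends only on the letter $a_1\cdots a_{k+1}$ (so that $\Psi_\ell$ is well defined), and that the successive splittings of $\Phi_\ell$ at the idempotents ${\rm e}_{u_i}$ telescope as claimed. All of this rests on the two features used throughout: consecutive length-$(k+1)$ factors of $\pi$ overlap in $k$ letters, so neighbouring pieces of $\theta_k\pi$ carry the same idempotent data; and an idempotent ${\rm e}_{u_i}$, viewed as a pseudoword, behaves as a two-sided ``infinite'' block under ${\mathtt i}_\ell$, ${\mathtt t}_\ell$ and $\Phi_\ell$. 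The passages from words to arbitrary pseudowords are routine density arguments, which I would only indicate.
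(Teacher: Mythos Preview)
Your proof is correct and follows essentially the same strategy as the paper's: unfold the hypothesis via Proposition~\ref{prop:char-psid-VD}, handle the $\LI$ condition using the idempotents ${\rm e}_{u_\iota},{\rm e}_{u_\tau}$ at the ends, and for the $\V$ part express $\Phi_\ell\theta'_k\pi$ as a product of fixed boundary terms (depending only on ${\mathtt i}_k\pi,{\mathtt t}_k\pi$) with a continuous homomorphism applied to $\Phi_k\pi$. The paper routes this last step through an auxiliary alphabet $\widetilde A$ of triples $(u_1,v,u_2)$ and the composite $\widetilde\Phi_\ell\circ\widetilde\psi_k$, whereas you define $\Psi_\ell$ directly on $A^{k+1}$; the paper also anchors the $\ell$-overlap on the right (appending ${\mathtt i}_\ell{\rm e}_{u_2}$) while you anchor on the left (prepending ${\mathtt t}_\ell{\rm e}_{u_1}$). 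These are interchangeable variants of the same telescoping computation, and your version is marginally leaner by avoiding $\widetilde A$.

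One small gap to flag: your intermediate identities $\theta'_k\pi=w_\iota\,\theta_k\pi\,w_\tau$ and $\theta_k\pi={\rm e}_{u_\iota}\,\psi'_k(\Phi_k\pi)$ are false when $|\pi|=k$, since then $\theta_k\pi=\psi_k(1)=1$ while ${\rm e}_{u_\iota}\,\psi'_k(1)={\rm e}_{u_\iota}$. The paper isolates this boundary case explicitly inside its Claim. Your final factorization $\Phi_\ell\theta'_k\pi=z_1z_3\,\Psi_\ell(\Phi_k\pi)\,z_2$ does in fact survive there (with $\Psi_\ell(1)=1$ and $u_\iota=u_\tau$ one checks directly that $z_1z_3z_2=\Phi_\ell(w_\iota{\rm e}_{u_\iota}w_\tau)=\Phi_\ell\theta'_k\pi$), so the conclusion is unaffected; you just need a sentence to cover that case separately rather than deriving it from the broken intermediate formulas.
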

 \begin{proof}
By Proposition~\ref{prop:char-psid-VD},  ${\mathtt i}_k\pi ={\mathtt i}_k \rho$, ${\mathtt t}_k\pi ={\mathtt
t}_k \rho$ and $\V\models \Phi_k\pi= \Phi_k\rho$. Therefore, if either $\pi\in A_{k-1}$, $\rho\in A_{k-1}$ or
$\pi,\rho\in A^{k}$,  then $\pi$ and $\rho$ are the same word and the result follows trivially.
   Thus, we may suppose that $\pi, \rho\in\omek{{\sigma^*}}{A}{\Se}\setminus A_{k-1}$ with at least one of $\pi$ and $\rho$ not in
   $A^{k}$. Hence, there exist words $v,y\in A^*$ and $u,x\in A^+$ such that
   $\lambda_k \pi=\lambda_k {\mathtt i}_k\pi=\lambda_k {\mathtt i}_k\rho=\lambda_k \rho=v{\rm
e}_u$ and $\varrho_k \pi=\varrho_k{\mathtt t}_k\pi =\varrho_k{\mathtt t}_k\rho =\varrho_k\rho={\rm e}_{x}y$.
Then, $\theta'_k\pi= v{\rm e}_u(\theta_k \pi){\rm e}_{x}y$ and $\theta'_k\rho=v{\rm e}_u(\theta_k \rho){\rm
e}_{x}y$ and to deduce $\V*\D\models \theta'_k \pi =\theta'_k \rho$ it suffices to prove that $\V*\D\models
\pi'=\rho'$ where
 $$\pi'={\rm e}_u(\theta_k \pi){\rm e}_{x}\quad\mbox{and}\quad\rho'={\rm e}_u(\theta_k \rho){\rm e}_{x}.$$
 That $\LI\models \pi'=\rho'$ holds is clear since $\Se\models \{{\rm e}_u=u^\omega,{\rm e}_x=x^\omega\}$.
Therefore, by~\eqref{eq:char-psid-VD}, to conclude the proof of the proposition it remains to show that
\begin{equation}\label{eq:VD-tpi-trhoe}
 \forall\ell\geq 1,\ \V\models \Phi_\ell \pi' =\Phi_\ell\rho'.
\end{equation}

 In order to prove~\eqref{eq:VD-tpi-trhoe}, consider the alphabet
   $$\widetilde A=\{(u_1,v,u_2)\in A_{k}\times A_{k}^1 \times A_{k}: \exists w\in A^{k+1},\
\theta_k w={\rm e}_{u_1}v{\rm e}_{u_2}\}$$ and let $\widetilde{\psi}_k:(\om{A^{k+1}}{\Se})^1\rightarrow
(\om{\widetilde A}{\Se})^1$ be the continuous homomorphism  extending the  mapping
\begin{alignat*}{2}
      A^{k+1} &    \rightarrow  \widetilde A
    \\    a_1\cdots a_{k+1} & \mapsto
   (u_1, a_{j_1+1}\cdots a_{j_2},u_2)\end{alignat*}
  with $\psi_k(a_1\cdots a_{k+1})={\rm e}_{u_1}a_{j_1+1} \cdots a_{j_2}{\rm e}_{u_2} $.     Now, for each  $\ell\geq 1$, let
 $\widetilde{\Phi}_\ell:(\om{\widetilde A}{\Se})^1\rightarrow (\om{A^{\ell+1}}{\Se})^1$ be the continuous  homomorphism
 which extends the mapping
\begin{alignat*}{2}
     \widetilde A & \rightarrow  \om{A^{\ell+1}}{\Se}
    \\   (u_1,v,u_2) & \mapsto \Phi_\ell\bigl({{\rm e}}_{u_1} v {{\rm e}}_{u_2} ({\mathtt i}_\ell {{\rm e}}_{u_2})\bigr).
    \end{alignat*}

  The mappings   involved are shown in the following (non-commutative) diagram:
\begin{equation*}
  \xymatrix{
  &  & ( \om{A}{\Se})^1  \ar[rd]_{ \Phi_\ell} \\
    \om{A}{ \Se}
     \ar[r]_{ \Phi_k}
      \ar[rru]^{ \theta_k}
      &
   ( \om{A^{k+1}}{\Se} )^1 \ar[ru]_{ \psi_k}   \ar[rd]^{\widetilde{ \psi}_k}    &
     &
   ( \om{A^{\ell+1}}{\Se})^1  \ar[r]^{p_\V}&   ( \om{A^{\ell+1}}{\V})^1
    \\
   &  & ( \om{\widetilde A}{\Se})^1  \ar[ru]^{ \widetilde{\Phi}_\ell} &
 }
\end{equation*}

The next statement holds.
\begin{claim}\label{claim:auxiliar}
 Let $w\in\{\pi,\rho\}$. Then $\Phi_\ell w' = (\widetilde{\Phi}_\ell \widetilde{\psi}_k \Phi_k w) (\Phi_\ell{{\rm
e}}_{x})$.
\end{claim}
\begin{proof}
The proof for $w=\rho$ being symmetric, we prove  the result only for $w=\pi$. We consider first the case in
which $\pi\in A^k$. In this case $\pi={\mathtt i}_k\pi={\mathtt t}_k\pi$ and $\Phi_k\pi=\theta_k\pi=1$. Hence
$u=x$ and so, as $\Se\models {{\rm e}}_{x}=x^\omega={{\rm e}}_{x}{{\rm e}}_{x}$, $\Phi_\ell \pi' =\Phi_\ell
({\rm e}_u{\rm e}_{x})=\Phi_\ell ({\rm e}_{x})=(\widetilde{\Phi}_\ell \widetilde{\psi}_k \Phi_k \pi)
(\Phi_\ell{{\rm e}}_{x})$. Consider next $\pi\in A^+\setminus A_k$. Let $r=|\pi|-k+1$ and notice that $r\ge 2$.
Then, $|\Phi_k \pi|=r-1$ and $\theta_k \pi$ is of the form $\theta_k \pi={{\rm e}}_{u_1} v_1{{\rm e}}_{u_2}
\cdots v_{r-1} {{\rm e}}_{u_{r}}$, with $u_p\in A_{k}$ and $v_q\in  A_{k}^1 $ for all $p$ and $q$. Hence,
    $$\widetilde{\psi}_k  \Phi_k \pi=(u_1, v_1,u_2) (u_2, v_2, u_3)\cdots (u_{r-1}, v_{r-1}, u_{r}).$$
   On the other hand, $u_1=u$ and $u_{r}=x$, so that $\pi'={{\rm e}}_{u_1}(\theta_k\pi){{\rm
e}}_{u_r}=\theta_k\pi$. Thus, since $\Phi_\ell$ is an $\ell$-superposition homomorphism,
  \begin{alignat*}{2}
    \widetilde{\Phi}_\ell \widetilde{\psi}_k   \Phi_k \pi &=
    \Phi_\ell\bigl({{\rm e}}_{u_1} v_1{{\rm e}}_{u_2}({\mathtt i}_\ell {\rm e}_{u_2})\bigr)  \Phi_\ell
    \bigl( {{\rm e}}_{u_2} v_2 {{\rm e}}_{u_3} ({\mathtt i}_\ell {\rm e}_{u_3})\bigr) \cdots
     \Phi_\ell\bigl({{\rm e}}_{u_{r-1}} v_{r-1} {{\rm e}}_{u_{r}} ({\mathtt i}_\ell {\rm e}_{u_{r}})\bigr) \\
     &= \Phi_\ell\bigl({{\rm e}}_{u_1} v_1 {{\rm e}}_{u_2} v_2\cdots {{\rm e}}_{u_{r-1}}
    v_{r-1} {{\rm e}}_{u_{r}} ({\mathtt i}_\ell {\rm e}_{u_{r}})\bigr)\\
    & = \Phi_\ell({{\rm e}}_{u_1} v_1 {{\rm e}}_{u_2} v_2\cdots {{\rm e}}_{u_{r-1}}
    v_{r-1} {{\rm e}}_{u_{r}})    \Phi_\ell\bigl(({\mathtt t}_\ell  {\rm e}_{u_{r}}) ({\mathtt i}_\ell {\rm e}_{u_{r}})\bigr)\\
     & = (\Phi_\ell  \pi')  \Phi_\ell\bigl(({\mathtt t}_\ell  {\rm e}_{u_{r}}) ({\mathtt i}_\ell {\rm
     e}_{u_{r}})\bigr),
  \end{alignat*}
 whence, as $\pi'$ is also equal to $(\theta_k\pi){{\rm e}}_{u_r}$,
\begin{alignat*}{2}
\Phi_\ell   \pi'  &= \Phi_\ell({{\rm e}}_{u_1} v_1 {{\rm e}}_{u_2} v_2\cdots {{\rm e}}_{u_{r-1}}
    v_{r-1} {{\rm e}}_{u_{r}} {\rm e}_{u_{r}})\\
    &= \Phi_\ell({{\rm e}}_{u_1} v_1 {{\rm e}}_{u_2} v_2\cdots {{\rm e}}_{u_{r-1}}
    v_{r-1} {{\rm e}}_{u_{r}})  \Phi_\ell\bigl(({\mathtt t}_\ell {\rm e}_{u_{r}} ) {\rm e}_{u_{r}}\bigr)\\
    &= (\Phi_\ell  \pi')  \Phi_\ell\bigl(({\mathtt t}_\ell  {\rm e}_{u_{r}}) ({\mathtt i}_\ell {\rm
     e}_{u_{r}})\bigr)(\Phi_\ell {\rm e}_{u_{r}})\\
    & =(\widetilde{\Phi}_\ell \widetilde{\psi}_k   \Phi_k \pi) (\Phi_\ell {\rm e}_{x}).
      \end{alignat*}
      This concludes the proof of the claim in case $\pi$ is a finite word.

      Suppose at last that $\pi\in\om{A}{\Se}\setminus A^{+}$ and let $(w_m)_m$ be a
       sequence in $A^+$ converging to $\pi$. Hence, since  ${\mathtt i}_k,{\mathtt t}_k:\om{A}{\Se}\rightarrow A_{k}$ are continuous homomorphisms,
       we may assume that  ${\mathtt i}_k w_{m}={\mathtt i}_k \pi$,
        ${\mathtt t}_k w_{m}={\mathtt t}_k \pi$ and $|w_{m}|>k+1$ for every integer $m\geq 1$. Hence, $\theta_k\pi$ and $\theta_k w_{m}$ are,
        respectively, of the forms  $\theta_k\pi ={\rm e}_{u}\gamma{\rm e}_{x}$ and $\theta_k w_{m}={\rm e}_{u}\gamma_m {\rm
        e}_{x}$ for some $\gamma,\gamma_m\in\om{A}{\Se}$. Hence $\pi'={{\rm e}}_{u}(\theta_k\pi){{\rm
e}}_{x}=\theta_k\pi$ and $w'_{m}={\rm e}_{u}(\theta_k w_{m}) {\rm
        e}_{x}=\theta_k w_{m}$.
        The claim is now an immediate consequence of the previous case and of the continuity of
        the functions $\Phi_\ell  \theta_k$ and $\widetilde{\Phi}_\ell \widetilde{\psi}_k\Phi_k$.
\end{proof}

The validity of the proposition can now be easily achieved. Indeed, since $\V\models \Phi_k\pi= \Phi_k\rho$ by
hypothesis and $\widetilde{\Phi}_\ell \widetilde{\psi}_k$ is a continuous homomorphism, one deduces from
Claim~\ref{claim:auxiliar} that $\V\models \Phi_\ell \pi'=(\widetilde{\Phi}_\ell \widetilde{\psi}_k \Phi_k \pi)
(\Phi_\ell{{\rm e}}_{x})=(\widetilde{\Phi}_\ell \widetilde{\psi}_k \Phi_k \rho) (\Phi_\ell{{\rm e}}_{x})
=\Phi_\ell\rho'$. Since $\ell$ is arbitrary, this proves~\eqref{eq:VD-tpi-trhoe} and concludes the proof of the
proposition.
    \end{proof}
\section{Pointlike reducibility of $\V*\D$}
We are now able to show that the reducibility of the pointlike problem for $\V$ implies the reducibility of the
pointlike problem for $\V * \D$. Recall that $\sigma$ is a fixed highly computable signature and $\sigma^*$ is a
highly computable $\sigma$-maximal signature verifying~\ref{is-1} and~\ref{is-2}.
\begin{theorem}
  Let $X$ be an alphabet and $\Sigma$ be a finite set of equations of the form $x=x'$ with $x,x'\in X$.
  If \V\  is  $\sigma$-reducible relatively to $\Sigma$, then $\V*\D$ is  $\sigma$-reducible relatively to $\Sigma$.
 \end{theorem}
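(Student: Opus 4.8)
The plan is to chain together the three main results established above --- Propositions~\ref{p:reducibilidadepontuais}, \ref{prop:same-value-S} and~\ref{prop:existenciasolucaoVxD} --- using the maps $\theta'_k$ as the bridge that transports $(\V*\D_k,\sigma^*)$-solutions up to $(\V*\D,\sigma)$-solutions. First I would fix the data of a solution: let $S$ be a finite $A$-generated semigroup, $\delta:\om{A}{\Se}\to S$ the corresponding continuous homomorphism, $\varphi:X\to S^1$ an evaluation, and suppose $\eta:X\to(\om{A}{\Se})^1$ is a $\V*\D$-solution of $\Sigma$ with respect to $(\varphi,\delta)$. I would then choose an integer $k>|S|$, so that the construction of $\theta'_k$ relative to $S$ and $\delta$, together with Propositions~\ref{prop:same-value-S} and~\ref{prop:existenciasolucaoVxD}, is available. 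Since $\V*\D=\bigcup_{\ell}\V*\D_\ell$, we have $\V*\D_k\subseteq\V*\D$, so every pseudoidentity valid in $\V*\D$ is valid in $\V*\D_k$; in particular $\eta$ is also a $\V*\D_k$-solution of $\Sigma$ with respect to $(\varphi,\delta)$.

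The hypothesis that $\V$ is $\sigma$-reducible relatively to $\Sigma$ now feeds into Proposition~\ref{p:reducibilidadepontuais}, which gives that $\V*\D_k$ is $\sigma^*$-reducible relatively to $\Sigma$. Applying this to the $\V*\D_k$-solution $\eta$ produces a $(\V*\D_k,\sigma^*)$-solution $\zeta:X\to(\omek{\sigma^*}{A}{\Se})^1$ of $\Sigma$ with respect to the same pair $(\varphi,\delta)$: thus $\delta\zeta=\varphi$, $\zeta X\subseteq(\omek{\sigma^*}{A}{\Se})^1$, and $\V*\D_k\models\zeta x=\zeta x'$ for every $(x=x')\in\Sigma$.

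To finish, I would set $\eta'=\theta'_k\circ\zeta$, extending $\theta'_k$ to $(\om{A}{\Se})^1$ by $\theta'_k 1=1$ for the case of a variable sent to the empty word, and check that $\eta'$ is a $(\V*\D,\sigma)$-solution of $\Sigma$ with respect to $(\varphi,\delta)$. The three required properties come one from each ingredient: the inclusion $\theta'_k(\omek{\sigma^*}{A}{\Se})\subseteq\omek{\sigma}{A}{\Se}$ observed after Lemma~\ref{lemma:representation_theta'_k} yields $\eta' X\subseteq(\omek{\sigma}{A}{\Se})^1$; Proposition~\ref{prop:same-value-S} gives $\delta\eta' x=\delta\theta'_k\zeta x=\delta\zeta x=\varphi x$ for all $x\in X$, hence $\delta\eta'=\varphi$; and for each $(x=x')\in\Sigma$, since $\zeta x,\zeta x'\in\omek{\sigma^*}{A}{\Se}$ with $\V*\D_k\models\zeta x=\zeta x'$, Proposition~\ref{prop:existenciasolucaoVxD} gives $\V*\D\models\theta'_k\zeta x=\theta'_k\zeta x'$, that is, $\V*\D\models\eta' x=\eta' x'$. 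This exhibits the desired $(\V*\D,\sigma)$-solution and so proves that $\V*\D$ is $\sigma$-reducible relatively to $\Sigma$.

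Since every component is already in place, there is no real obstacle left in this last step; the only points needing attention are the choice $k>|S|$, forced by the standing hypothesis of the section in which $\theta'_k$ is constructed, and the routine convention for the empty $\sigma^*$-word. The genuinely substantial input was Proposition~\ref{prop:existenciasolucaoVxD}, where Claim~\ref{claim:auxiliar} and the characterisation~\eqref{eq:char-psid-VD} of pseudoidentities over $\V*\D$ carried the weight.
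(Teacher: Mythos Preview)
Your proposal is correct and follows essentially the same approach as the paper: fix $k>|S|$, use that $\eta$ is then a $\V*\D_k$-solution, apply Proposition~\ref{p:reducibilidadepontuais} to obtain a $(\V*\D_k,\sigma^*)$-solution, and post-compose with $\theta'_k$ to get a $(\V*\D,\sigma)$-solution via Propositions~\ref{prop:same-value-S}, \ref{prop:existenciasolucaoVxD} and Lemma~\ref{lemma:representation_theta'_k}. Your write-up is in fact slightly more explicit than the paper's in spelling out the three verification steps and in noting the convention $\theta'_k 1=1$ for variables sent to the empty word.
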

 \begin{proof}
Let   $S$ be a finite $A$-generated semigroup, $\delta:\om{A}{\Se}\rightarrow S$ be the continuous homomorphism that extends
   the generating mapping of $S$ and $\varphi:X\rightarrow S^1$ be a function.
 Suppose that   $\eta:X\rightarrow \om{A}{\Se}$ is a $\V*\D$-solution of $\Sigma$ with respect to the pair $(\varphi,\delta)$. Hence, for every integer $k\geq 1$, as $\V*\D_k$ is a subpseudovariety of $\V*\D$, $\eta$ is also a $\V*\D_k$-solution  of $\Sigma$ with
 respect to the same pair $(\varphi,\delta)$.

Let us assume that \V\ is $\sigma$-reducible relatively to $\Sigma$. Then, by Proposition~\ref{p:reducibilidadepontuais}, $\V*\D_k$ is $\sigma^*$-reducible relatively to $\Sigma$ for every $k\geq 1$, whence there is  a  $(\V*\D_k,\sigma^*)$-solution $\eta_k$ of $\Sigma$
 with respect to the pair $(\varphi,\delta)$.
    Fix an integer  $k> |S|$. Hence,  by Propositions~\ref{prop:same-value-S} and~\ref{prop:existenciasolucaoVxD} and Lemma~\ref{lemma:representation_theta'_k}, $\theta_k'  \eta_k$
    is a $(\V*\D,\sigma)$-solution
    of $\Sigma$ with  respect to the pair $(\varphi,\delta)$.
\end{proof}

 \begin{corollary}\label{corol:V-V*D_reducible}
 If \V\  is pointlike $\sigma$-reducible, then $\V*\D$ is pointlike $\sigma$-reducible.
 \end{corollary}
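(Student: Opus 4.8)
The plan is to read the corollary directly off the preceding Theorem, with essentially no extra work. By definition, $\V*\D$ is pointlike $\sigma$-reducible precisely when it is $\sigma$-reducible relatively to every system $\Sigma$ of the form $x_1=x_2=\cdots=x_m$ with $m\geq 2$. So I would fix such a system over the alphabet $X=\{x_1,\ldots,x_m\}$ and observe that it amounts to the finite set of equations $\{x_1=x_2,\ x_2=x_3,\ \ldots,\ x_{m-1}=x_m\}$, each of which has the form $x=x'$ with $x,x'\in X$; indeed, for any pseudovariety $\W$, any implicit signature and any pair $(\varphi,\delta)$, a mapping $\eta$ is a $\W$-solution of $x_1=\cdots=x_m$ with respect to $(\varphi,\delta)$ if and only if it is a $\W$-solution of this set, so the two presentations of $\Sigma$ are interchangeable and $\Sigma$ falls within the scope of the Theorem.

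With that observation in hand, the argument is immediate. First I would invoke the hypothesis: since $\V$ is pointlike $\sigma$-reducible, it is in particular $\sigma$-reducible relatively to $\Sigma$. Then the preceding Theorem applies verbatim and yields that $\V*\D$ is $\sigma$-reducible relatively to $\Sigma$. As $\Sigma$ ranges over all systems of the form $x_1=\cdots=x_m$ with $m\geq 2$, this is exactly the statement that $\V*\D$ is pointlike $\sigma$-reducible.

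There is no genuine obstacle here: all the substance of the proof is already contained in the Theorem (which in turn rests on Propositions~\ref{prop:same-value-S} and~\ref{prop:existenciasolucaoVxD}, Lemma~\ref{lemma:representation_theta'_k}, and, through Proposition~\ref{p:reducibilidadepontuais}, on the assumed pointlike $\sigma$-reducibility of $\V$). The only point deserving a word of care is the routine bookkeeping identification of the chain system $x_1=\cdots=x_m$ with a finite set of pairwise equations $x=x'$, which is harmless as explained above; and the standing hypothesis $\V\nsubseteq\LI$ remains in force, so no separate treatment of locally trivial $\V$ is required.
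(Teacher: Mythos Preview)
Your proposal is correct and matches the paper's approach: the paper states the corollary without proof, treating it as an immediate consequence of the preceding Theorem, which is exactly what you do. The only thing you add is the (harmless) explicit remark that a chain $x_1=\cdots=x_m$ is the same as the finite set of equations $\{x_i=x_{i+1}\}$, which the paper leaves tacit.
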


 The canonical signature $\kappa$ is an example of a highly computable $\kappa$-maximal signature~\cite{Almeida&Costa&Teixeira:2010} that
  verifies~\ref{is-1} and~\ref{is-2}. Hence, the following is a particular case of Corollary~\ref{corol:V-V*D_reducible}.
 \begin{corollary}
 If \V\  is pointlike $\kappa$-reducible, then $\V*\D$ is pointlike $\kappa$-reducible.
 \end{corollary}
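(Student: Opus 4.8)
The plan is to obtain this corollary as the instance $\sigma=\sigma^*=\kappa$ of Corollary~\ref{corol:V-V*D_reducible}. So I would first check that this specialization is admissible, which amounts to three points: that $\kappa$ is a highly computable implicit signature; that $\kappa$ may itself serve as a highly computable $\kappa$-maximal signature, i.e.\ that $\kappa\in\mathcal E^\kappa$ and $\omek{\sigma''}{A}{\Se}\subseteq\omek{\kappa}{A}{\Se}$ for every $\sigma''\in\mathcal E^\kappa$ and every alphabet $A$; and that $\kappa$ satisfies conditions~\ref{is-1} and~\ref{is-2}. Granting these, Corollary~\ref{corol:V-V*D_reducible} applied with $\sigma=\sigma^*=\kappa$ reads exactly: if $\V$ is pointlike $\kappa$-reducible, then so is $\V*\D$.

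Next I would dispatch the three verifications. That $\kappa$ is highly computable is clear, being a finite set of computable operations. The $\kappa$-maximality of $\kappa$ comes from~\cite{Almeida&Costa&Teixeira:2010}: once $\kappa\in\mathcal E^\kappa$ is known, the inclusion $\omek{\kappa^*}{A}{\Se}\subseteq\omek{\kappa}{A}{\Se}$ of~\cite[Propositions~4.1 and~4.10]{Almeida&Costa&Teixeira:2010} for a $\kappa$-maximal $\kappa^*$, together with the maximality of $\kappa^*$, forces $\omek{\kappa}{A}{\Se}=\omek{\kappa^*}{A}{\Se}$, so $\kappa$ is itself $\kappa$-maximal. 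For~\ref{is-1} I would take, for a word $u\in A^+$, the $\kappa$-term ${\rm e}_u:=u\,u^{\omega-1}$, plainly computable from $u$: writing $G$ for the maximal closed subgroup at the idempotent $u^\omega$, the element $u^{\omega+1}=u\,u^\omega$ lies in $G$ with group inverse $u^{\omega-1}$, whence $u\,u^{\omega-1}=u\,u^\omega\,u^{\omega-1}=u^{\omega+1}u^{\omega-1}=u^\omega$ and so $\Se\models{\rm e}_u=u^\omega$. Finally, condition~\ref{is-2} --- that every $\kappa$-term $w$ admits a computable $\kappa$-term $\tau_w$ with $\Se\models w=({\mathtt i}_kw)\tau_w$ --- is again available from~\cite{Almeida&Costa&Teixeira:2010}, and can in any case be obtained by a routine induction on the structure of $w$ which peels off its prefix of length at most $k$.

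With these in place, I would conclude by invoking Corollary~\ref{corol:V-V*D_reducible} with $\sigma=\sigma^*=\kappa$. I do not expect any genuine obstacle at this stage: all of the substance lies upstream, in Proposition~\ref{p:reducibilidadepontuais} and, above all, in the construction of the conversion map $\theta'_k$ together with Proposition~\ref{prop:existenciasolucaoVxD}. If one insists on naming the most delicate point of the present argument, it would be the two structural facts about $\kappa$-terms used above --- the $\kappa$-maximality of $\kappa$ and condition~\ref{is-2} --- both of which may simply be cited from~\cite{Almeida&Costa&Teixeira:2010}.
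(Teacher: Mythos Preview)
Your proposal is correct and follows exactly the paper's approach: the corollary is obtained by specializing $\sigma=\sigma^*=\kappa$ in Corollary~\ref{corol:V-V*D_reducible}, after observing that $\kappa$ is a highly computable $\kappa$-maximal signature (as established in~\cite{Almeida&Costa&Teixeira:2010}) satisfying~\ref{is-1} and~\ref{is-2}. You supply more detail than the paper --- in particular the explicit witness ${\rm e}_u=u\,u^{\omega-1}$ for~\ref{is-1} --- but the argument is essentially identical.
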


 This result applies, for instance, to the pseudovarieties $\Sl$, $\G$, $\J$ and ${\bf R}$.
\section*{Acknowledgments}
This work  was supported by the European Regional Development Fund, through the programme COMPETE, and by the
Portuguese Government through FCT -- \emph{Funda\c c\~ao para a Ci\^encia e a Tecnologia}, under the project
 PEst-C/MAT/UI0013/2014.

\end{document}